\tikzstyle{nodal}=[circle,draw,fill=black,inner sep=0pt, minimum width=4pt]
 \DeclareFontFamily{U}{wncy}{}
     \DeclareFontShape{U}{wncy}{m}{n}{<->wncyr10}{}
     \DeclareSymbolFont{mcy}{U}{wncy}{m}{n}
     \DeclareMathSymbol{\Sh}{\mathord}{mcy}{"58} 
\newtheorem{theorem}{Theorem}[section]
\newtheorem{proposition}[theorem]{Proposition}
\newtheorem{lemma}[theorem]{Lemma}
\theoremstyle{definition}
 \newtheorem*{acknowledgments*}{Acknowledgments}
\theoremstyle{remark}
\newtheorem{remark}[theorem]{Remark}
\newcommand{\Gal}{\mathrm{Gal}}
\newcommand{\Hom}{\mathrm{Hom}}
\newcommand{\Aut}{\mathrm{Aut}}
\newcommand{\Br}{\mathrm{Br}}
\newcommand{\PP}{\mathbb{P}}
\newcommand{\CC}{\mathbb{C}}
\newcommand{\QQ}{\mathbb{Q}}
\newcommand{\ZZ}{\mathbb{Z}}
\newcommand{\cO}{\mathcal{O}}
\newcommand{\cS}{\mathcal{S}}
\newcommand{\Pic}{\mathrm{Pic}}
\DeclareMathOperator{\rk}{rk}
\newcommand{\disc}{\mathrm{disc}}
\newcommand{\Eff}{\mathrm{Eff}}
\newcommand{\RH}{\mathrm{H}}
\newcommand{\piet}{\pi_1^{\mathrm{\acute{e}t}}}
\newcommand{\pitop}{\pi_1^{\mathrm{top}}}
\newcommand{\ov}{\overline}
\newcommand{\Gi}[1]{\textcolor{blue}{(Gi: #1)}}
\title{Non-thin rational points for elliptic K3 surfaces}
\author{Damián Gvirtz-Chen}
\address{School of Mathematics and Statistics\\ University of Glasgow \\ University Place\\ Glasgow G12 8QQ \\ United Kingdom}
\email{damian.gvirtz@glasgow.ac.uk}
\author{Giacomo Mezzedimi}
\address{Mathematisches Institut \\ Universität Bonn \\ Endenicher Allee 60 \\ 53115 Bonn \\ Germany}
\email{mezzedim@math.uni-bonn.de}
\date{\today}
\subjclass[2020]{14J28 14G05 (14J27 11R45 06B05)}
\keywords{K3 surface, rational points, Hilbert property, elliptic fibration}
\begin{document}

\maketitle

\begin{abstract}
We prove that elliptic K3 surfaces over a number field which admit a second elliptic fibration satisfy the potential Hilbert property. Equivalently, the set of their rational points is not thin after a finite extension of the base field.

Furthermore, we classify those families of elliptic K3 surfaces over an algebraically closed field which do not admit a second elliptic fibration.
\end{abstract}

\section{Introduction}

The purpose of this note is to prove the following theorem:

\begin{theorem}\label{thm:main}
Let $X$ be a K3 surface over a number field $k$ admitting two distinct elliptic fibrations. Then, after a finite field extension $k'/k$, the set of rational points $X(k')$ is not thin. 
\end{theorem}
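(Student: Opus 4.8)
The plan is to argue by contradiction, exploiting the simple connectivity of K3 surfaces to constrain the covers that can appear in any thin covering, and then to play the two fibrations against each other. Write $f_1, f_2 \colon X \to \PP^1$ for the two fibrations. After replacing $k$ by a finite extension I may assume that both $f_j$ admit a section (by the standard reduction to the associated Jacobian fibrations), so that their generic fibers $E_1, E_2$ are elliptic curves over $k(\PP^1)$; the potential Hilbert property is insensitive to such a base extension. Suppose, for contradiction, that $X(k')$ is thin for every finite $k'/k$. Fixing a suitable $k'$, write $X(k') \subseteq Z(k') \cup \bigcup_{i} \pi_i(Y_i(k'))$ with $Z \subsetneq X$ closed and each $\pi_i \colon Y_i \to X$ a finite morphism of degree $\geq 2$ from a normal irreducible variety.

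The first key step is that, since $X$ is a K3 surface, $\piet(X_{\bar k}) = 0$; by purity of the branch locus, a cover of degree $\geq 2$ that is étale in codimension one would be trivial, so each $\pi_i$ has a nonempty branch divisor $B_i \subset X$. This is the structural input unavailable for general surfaces, and it lets me sort the covers by the position of $B_i$ relative to the two fibrations. I split into three cases: (I) $B_i$ is horizontal for $f_1$, i.e. $f_1(B_i) = \PP^1$; (II) $B_i$ is vertical for $f_1$ but horizontal for $f_2$; (III) $B_i$ is vertical for both $f_j$. Every irreducible curve is horizontal for at least one $f_j$ unless it is contracted by the generically finite morphism $(f_1,f_2)\colon X \to \PP^1 \times \PP^1$, and there are only finitely many such curves; hence in case (III) the divisor $B_i$ is supported on this finite configuration $\Gamma$ of contracted $(-2)$-curves.

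The engine for cases (I) and (II) is a fibered Hilbert irreducibility argument. In case (I) the restriction of $\pi_i$ to $E_1$ is a ramified cover of the generic elliptic curve, hence of genus $\geq 2$; by a specialization argument the same holds over a non-thin set of $t \in \PP^1(k')$, so on such fibers $\pi_i(Y_i(k'))$ meets $X_t(k')$ in a set coming from a higher-genus curve, which by Faltings is finite and cannot exhaust the fiber. Intersecting the (finitely many) resulting co-thin sets and combining with the existence of a \emph{non-thin} set of fibers carrying enough rational points — supplied by positive generic Mordell–Weil rank when one of the fibrations has it, and otherwise by a sufficiently abundant supply of fibers whose rank jumps — should show that the type-(I) covers together with $Z$ cannot cover $X(k')$ on the $f_1$-side; case (II) is identical with the roles of $f_1$ and $f_2$ exchanged.

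The main obstacle is case (III): a cover whose branch divisor lies on $\Gamma$ is étale over every fiber $X_t$ of $f_1$ with $t$ outside a fixed finite set, where it restricts to an isogeny, so its image meets $X_t(k')$ in a coset of a finite-index subgroup. Neither fibration alone removes these, since $\Gamma$ is vertical for both. The resolution I expect to carry out is genuinely two-dimensional: the fibers of $f_2$ are multisections of $f_1$, and as the $f_2$-fiber varies, its intersection points with a fixed $X_t$ move, producing points of $X_t(k')$ that cannot all be confined to the finitely many fixed cosets arising from the type-(III) covers. Making this quantitative — controlling how the isogeny cosets vary with $t$ and ensuring the two families of points are transverse to them, while keeping the good loci non-thin rather than merely infinite — is the delicate heart of the proof, and is where I anticipate spending most of the effort.
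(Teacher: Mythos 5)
Your reduction correctly isolates the hard case, but the argument you propose for it is precisely the one that is known not to suffice, and this is a genuine gap rather than a deferred technicality. A cover $\pi_i\colon Y_i\to X$ whose branch divisor lies in the configuration $\Gamma$ of curves contracted by $(f_1,f_2)$ is étale in codimension one over the complement of $\Gamma$; since $\Gamma$ is vertical for both fibrations, $\pi_i$ restricts to an unramified cover (a torsor under an isogeny) of the generic fibre of \emph{each} of the two fibrations. The double-fibration mechanism you invoke in your case (III) --- letting the $f_2$-fibres sweep out multisections of $f_1$ --- only eliminates covers whose restriction to at least one family of fibres has genus $\ge 2$, and these covers have genus-one restriction to both. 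This is exactly the obstruction (the ``imprimitive possibility'' for the over-exceptional lattice) that the authors describe as having kept the general case out of reach; Corvaja--Zannier, Demeio and the authors' earlier work already pushed the two-fibration/Faltings argument as far as it goes, namely through your cases (I) and (II). The paper closes case (III) with two inputs absent from your sketch: first, \Cref{prop:piet}, the finiteness of $\piet(X\smallsetminus\Delta)$, proved via the Catanese--Keum--Oguiso theorem on fundamental groups of smooth loci of K3 surfaces with canonical singularities together with Garbagnati's classification of abelian covers (the bound $\rk E'(X)\le 17$ of \Cref{lem:bound} rules out the torus-quotient alternative); second, \Cref{prop:cover}, a descent along a dominant rational map $\varphi\colon Z\dashrightarrow X$ of degree $p^2$ obtained from a Tate--Shafarevich twist, with $p$ coprime to $|\piet(X\smallsetminus\Delta)|$ and with $Z$ a K3 surface carrying two elliptic fibrations and $\rk E'(Z)\le 3$. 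The coprimality forces each residual case-(III) cover to pull back irreducibly to $Z$, where the Hilbert Property is already known by the earlier results, and the Hilbert Property then descends to $X$. Nothing in your plan produces either ingredient, and ``making the case (III) idea quantitative'' cannot succeed as stated.

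A secondary error: you cannot assume after a finite extension of $k$ that both fibrations acquire sections. The index of the generic fibre over $\ov k(\PP^1)$ is a geometric invariant, and replacing $X$ by its Jacobian fibration changes the surface; there is no dominant map from the Jacobian onto $X$ through which the Hilbert Property would transfer back. (The paper does use Jacobians and Tate--Shafarevich twists, but in the opposite direction: it realises $X$ as the image of a degree-$p^2$ rational map from another twist $Z$.)
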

The degree $[k':k]$ can be uniformly bounded, see \Cref{rem:bounded}.

Varieties with a non-thin set of rational points after a finite extension of the base field are said to satisfy the \emph{potential Hilbert Property} (see \cite[\S3.1]{serre.topics.galois.theory} for an overview of thin sets and the Hilbert Property). A widely open conjecture states that all K3 surfaces over number fields satisfy the potential Hilbert Property \cite[Conjecture~1.7]{CDJLZ} and \Cref{thm:main} answers this conjecture positively in our situation. 

Special cases of \Cref{thm:main} are known. Diagonal quartic surfaces were treated by Corvaja and Zannier in \cite[Theorem~1.6]{corvaja.zannier.hilbert}, and product Kummer surfaces by Demeio in \cite[Proposition~4.4]{demeio.elliptic.fibrations}. Further progress was made in \cite{gvirtz.mezzedimi.hilbert}, where we showed that the potential Hilbert Property holds for K3 surfaces $X$ which (a) cover an Enriques surface, or (b) admit two elliptic fibrations and have Picard rank at most $9$, or (c) have Picard rank at least $6$ except for a finite list $\cS$ of excluded geometric Picard lattices $\Pic(X_{\ov k})$. The list $\cS$ in (c) is not explicitly known and expected to be very large.

In all the mentioned known cases, the so-called \emph{over-exceptional lattice} $E'(X_{\ov k})$, introduced in \cite{gvirtz.mezzedimi.hilbert}, embeds primitively into $\Pic(X_{\ov k})$ and has small rank, from which the potential Hilbert Property follows. The general statement of \Cref{thm:main} has so far been out of reach due to a lack of understanding of the imprimitive possibility. As we are going to show, the conditions (a)-(c) can be leveraged by a descent argument for the Hilbert Property along K3 covers and a finiteness theorem for universal covers of open K3 surfaces, to yield the full \Cref{thm:main}.

Moreover, the recently completed classification of K3 surfaces of zero entropy by Yu \cite{yu.entropy} and the second author and Brandhorst \cite{brandhorst.mezzedimi.borcherds} allows us to give a complete list of those K3 surfaces with only one elliptic fibration, comprising $92$ families with geometric Picard ranks between $3$ and $12$ and one class of families in Picard rank $2$. We determine this list in Section \ref{sec:unique.elliptic.fibration}. Another formulation of our result is thus that elliptic K3 surfaces which do not fall into one of the $93$ classes (e.g.\ whenever the geometric Picard rank is greater than $12$) satisfy the potential Hilbert Property. 

As far as the potential Hilbert Property is concerned, the present work conclusively exhausts the method started by Corvaja and Zannier. To prove the potential Hilbert Property for \emph{every} elliptic K3 surface, all that remains are the $93$ classes. It would be an interesting open problem to deal with these cases, as doing so will likely lead to significant input from new methods.




\begin{acknowledgments*}
We warmly thank Reinder Meinsma for the many useful conversations about Jacobians of elliptic K3 surfaces.
\end{acknowledgments*}

\section{Elliptic K3 surfaces}

In the following let $k$ be a field of characteristic $0$, and $\ov k$ an algebraic closure of $k$. 
A \emph{K3 surface} over $k$ is a smooth, projective, geometrically integral surface with trivial canonical bundle and $H^1(X,\cO_X)=0$. A K3 surface $X$ is \emph{elliptic} if it is endowed with an \emph{elliptic fibration}, i.e. a proper and flat morphism $\pi: X\to \PP^1$ such that the general fiber is a smooth curve of genus $1$. Every elliptic fibration on $X$ arises from a pencil $|F|$, where $F\in \Pic(X)$ is a primitive, nef, isotropic vector represented by a curve of arithmetic genus $1$.
We recall a version of the Shioda-Tate formula that will be useful in the following. We denote by $\rho(X)\coloneqq \rk \Pic(X)$ the \emph{Picard rank} of $X$, and if $N$ is a negative definite lattice, $N_{root}$ is the sublattice of $N$ spanned by the $(-2)$-roots.

\begin{proposition}[Shioda-Tate formula] \label{prop:shioda.tate}
Let $X$ be a K3 surface over $\ov k$ and $|F|$ an elliptic fibration on $X$. The stabilizer of $F\in \Pic(X)$ in $\Aut(X)$ is infinite if and only if
\begin{equation*}
\rho(X)-2 - \rk((F^\perp/\langle F \rangle)_{root}) > 0.
\end{equation*}
\end{proposition}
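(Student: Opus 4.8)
The plan is to translate the assertion about $\Aut(X)$ into a statement about the Mordell--Weil rank of the fibration, and then read off that rank from the lattice quantity $\rho(X)-2-\rk((F^\perp/\langle F\rangle)_{root})$. First I would record the structure of $W:=F^\perp/\langle F\rangle$, a negative definite lattice of rank $\rho(X)-2$. If $|F|$ has a section $O$, then $O\cdot F=1$ and $\langle F,O\rangle$ is a copy of the hyperbolic plane $U$, so $\Pic(X)=U\oplus W$ with $W=\langle F,O\rangle^\perp$ the frame lattice and $F^\perp/\langle F\rangle\cong W$. When there is no section I would pass to the Jacobian fibration $J\to\PP^1$: since $X$ and $J$ have isometric transcendental, and hence isometric Néron--Severi, lattices compatibly with the fibre class $F$, every lattice invariant below is unchanged, while the generic fibre of $X$ becomes a torsor under that of $J$.

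Next I would bring in the classical Shioda--Tate formula $\rho(X)=2+\rk\big(\bigoplus_v T_v\big)+\rk\MW$, where $T_v$ is the root lattice spanned by the fibre components over $v$ not meeting $O$. The point is to identify $\bigoplus_v T_v$ with $W_{root}$: a $(-2)$-class of $W$ is, up to sign, effective (Riemann--Roch) and orthogonal to the nef class $F$, hence supported on fibres, and orthogonal to $O$, hence disjoint from the zero section; such effective roots are exactly the fibre components counted by the $T_v$. Thus $W_{root}=\bigoplus_v T_v$, and $\MW\cong\Pic(X)/(U\oplus W_{root})\cong W/W_{root}$, so that $\rk\MW=(\rho(X)-2)-\rk(W_{root})$. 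Consequently the inequality in the statement holds if and only if $\MW$ is infinite.

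It then remains to prove that $\operatorname{Stab}_{\Aut(X)}(F)$ is infinite precisely when $\MW$ is infinite. I would filter this stabiliser through its action on the base. The image of $\operatorname{Stab}_{\Aut(X)}(F)\to\Aut(\PP^1)$ preserves the discriminant divisor, whose support consists of at least three points, and is therefore finite. The kernel consists of fibre-preserving automorphisms, i.e.\ automorphisms of the generic fibre $X_\eta$ over $\ov k(\PP^1)$; for a genus-one curve this group is an extension of a finite group (the automorphisms fixing the origin of $\operatorname{Jac}(X_\eta)$, together with torsion sections) by the translation group $\operatorname{Jac}(X_\eta)(\ov k(\PP^1))=\MW$. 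Hence the stabiliser is infinite exactly when $\MW$ is, which closes both implications.

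The main obstacle is the bookkeeping in this last step rather than any single estimate. To make the $\Aut$--versus--lattice passage rigorous I would invoke the Torelli theorem, which realises $\Aut(X)$, up to finite kernel and cokernel, as the subgroup of $\Or(\Pic(X))$ preserving the nef cone. The stabiliser of the isotropic class $F$ in $\Or(\Pic(X))$ always contains the infinite abelian group of Eichler transvections $E_{F,w}$ for $w\in W$; the content of the rank-zero case is that the transvections in the directions of $W_{root}$ are governed by the Weyl group generated by the $(-2)$-classes of $W_{root}$ and so contribute only finitely many automorphisms, which is exactly why the \emph{root} sublattice and not all of $W$ enters the formula. Geometrically this is the statement that the only fibre-preserving automorphisms surviving modulo roots are the Mordell--Weil translations, whose cosets $W/W_{root}$ are infinite iff $\rk W>\rk W_{root}$. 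Verifying this compatibility of $F$, $W$, and $W_{root}$ — including its transfer to the Jacobian in the sectionless case — is the part needing the most care.
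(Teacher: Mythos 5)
Your overall architecture is the same as the paper's: reduce the statement about $\operatorname{Stab}_{\Aut(X)}(F)$ to the Mordell--Weil group by noting that the induced action on $\PP^1$ is finite (at least three singular fibres) and that the kernel is, up to finite index, $\mathrm{Jac}(X_\eta)(\ov k(\PP^1))$; then compute the Mordell--Weil rank by the classical Shioda--Tate formula; and in the sectionless case pass to the Jacobian fibration. The identification of $W_{root}$ with the span of non-identity fibre components is also the standard argument and is fine. The closing paragraph about Torelli and Eichler transvections is unnecessary for this route and somewhat muddles the picture, but it does not damage the main line.

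There is, however, one step that is genuinely false as stated: the claim that $X$ and $J=JX$ have ``isometric transcendental, and hence isometric N\'eron--Severi, lattices compatibly with the fibre class.'' By \cite[\S11, (4.5)]{huybrechts.k3.surfaces} one has $\disc(\Pic(X))=d^2\disc(\Pic(J))$, where $d$ is the order of $[X]$ in $\Sh(J)$; in the sectionless case $d>1$ (otherwise $X\cong J$ and $|F|$ would have a section), so the two N\'eron--Severi lattices have different discriminants and cannot be isometric, and likewise for the transcendental lattices. What you actually need --- and what is true --- is only that the numerical quantity $\rho-2-\rk((F^\perp/\langle F\rangle)_{root})$ is the same for $X$ and for $J$. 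The paper gets this from two weaker facts: $\rho(JX)=\rho(X)$ \cite[Proposition~11.4.5]{huybrechts.k3.surfaces}, and $|F|$ and $|JF|$ have the same singular fibres \cite[Theorem~4.3.20]{CossecDolgachevLiedtke}, so that the root sublattices (spanned by fibre components modulo the fibre class) have equal rank. Replace your isometry claim by this argument and the proof closes; as written, the transfer step would fail.
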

\begin{proof}
Denote by $G$ the stabilizer in $\Aut(X)$ of $F\in \Pic(X)$. Since the elliptic fibration $|F|$ has at least three singular fibers (see e.g. \cite[Last~paragraph~of~§1.1]{mezzedimi.entropy}), the action of $G$ on the base $\PP^1$ is finite, hence $G$ is isomorphic up to a finite group to the group of automorphisms of the generic fiber $F_\eta$.

If $|F|$ has a section, then $G$ is isomorphic to the Mordell--Weil group of $|F|$ up to a finite group, so the claim follows by the usual Shioda-Tate formula (see e.g. \cite[Corollary~6.7]{schuett.shioda.mordell.weil}).

If instead $|F|$ has no section, the associated Jacobian fibration $|JF|: JX \to \PP^1$ is a K3 surface of the same Picard rank \cite[Proposition~11.4.5]{huybrechts.k3.surfaces}, and it has the same singular fibers as $|F|$ \cite[Theorem~4.3.20]{CossecDolgachevLiedtke}. It is well-known that the group of automorphisms of $F_\eta$ is isomorphic up to a finite group to the group of $\ov k (\PP^1)$-points of the Jacobian $JF_\eta$, which is in turn isomorphic to the Mordell--Weil group of $|JF|$.
\end{proof}

Let $X$ be a K3 surface over $k$ admitting an elliptic fibration with a section. The \emph{Tate-Shafarevich group} $\Sh(X)$ parametrizes isomorphism classes of elliptic K3 surfaces $Y$ over $k$ whose Jacobian is isomorphic to $X$. There is a natural isomorphism $\Sh(X) \cong \Br(X)/\Br(k)$ \cite[Proposition~(4.5)]{grothendieck.dixIII}. Applied to $X_{\ov k}$, this yields \[\Sh(X_{\ov k})\cong\Br(X_{\ov k})\cong(\QQ/\ZZ)^{22-\rho(X_{\ov k})},\] where the first isomorphism is compatible with the $\Gal(\ov k/k)$-action. In particular $\Sh(X_{\ov k})$ is a divisible group.

For any elliptic K3 surface $Y$ with $[Y]\in \Sh(X)$ and any integer $n\ge 0$, one can check that the higher Jacobian $\mathrm{Jac}^n(Y)$ has class $[\mathrm{Jac}^n(Y)] = n[Y]$ in $\Sh(X)$ \cite[Remark~11.5.2]{huybrechts.k3.surfaces}. By extending the natural multiplication-by-$n$ morphism between the generic fibers, one obtains a rational dominant morphism $Y \cong \mathrm{Jac}^1(Y) \dashrightarrow \mathrm{Jac}^n(Y)$ that is multiplication by $n$ on the smooth fibers.

\section{Proof of \Cref{thm:main}}
Let $X$ be a K3 surface over a subfield $k\subset\CC$ admitting two distinct elliptic fibrations. Recall that the over-exceptional lattice was defined in \cite[Definition 5.1]{gvirtz.mezzedimi.hilbert} as the sublattice $E'(X)$ of $\Pic(X)$ that is generated by the classes of curves orthogonal to all elliptic fibrations on $X$. The presence of two distinct elliptic fibrations implies that $E'(X)$ is a root lattice, i.e.\ an even, negative definite lattice that admits a basis
given by vectors of norm $-2$. Denote by $\Delta\subset X$ the union of $(-2)$-curves whose class lies in $E'(X)$.

There exists a finite extension $L/k$ such that $\Pic(X_L)\simeq \Pic(X_{\ov k})\simeq \Pic(X_{\CC})$ \cite[XIII, Théorème~5.1]{sga6}. Replacing $k$ with $L$, it suffices to prove the potential Hilbert Property for $X$ when the full geometric Picard lattice is realised over the base field. From now on, we will assume that this is the case. In particular
\[E'(X)\simeq E'(X_{\ov k}).\]

We will need an easy-to-prove bound for the rank of the over-exceptional lattice. 
\begin{lemma}\label{lem:bound}
 The rank of the over-exceptional lattice $E'(X)$ is at most $17$.
\end{lemma}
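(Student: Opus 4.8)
The plan is to bound the rank of $E'(X)$ by combining the Shioda-Tate formula (\Cref{prop:shioda.tate}) with the fact that $X$ carries two distinct elliptic fibrations. The over-exceptional lattice $E'(X)$ is by definition generated by classes of curves orthogonal to every elliptic fibration. So if $|F|$ is any elliptic fibration on $X$, then $E'(X)$ is orthogonal to $F$, which means $E'(X) \subseteq F^\perp$; moreover since $F$ is isotropic and $E'(X)$ is negative definite, $F \notin E'(X)$, so $E'(X)$ in fact embeds into $F^\perp/\langle F\rangle$. Because the generators of $E'(X)$ are $(-2)$-curves, they land in the root sublattice, giving an embedding $E'(X) \hookrightarrow (F^\perp/\langle F\rangle)_{root}$ and hence $\rk E'(X) \le \rk((F^\perp/\langle F\rangle)_{root})$.

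**First I would** make the above embedding precise and verify that classes of $(-2)$-curves in $E'(X)$ really do map into the root part of $F^\perp/\langle F\rangle$ (the image of a $(-2)$-curve orthogonal to $F$ is a $(-2)$-class in the quotient, hence a root). **Next I would** invoke the Shioda-Tate formula to turn this rank bound into a constraint. The key observation is that a K3 surface with \emph{two distinct} elliptic fibrations must have an infinite automorphism group: two distinct fibrations $|F_1|, |F_2|$ generate (via their associated parabolic reflections, or via the composition of the reflections fixing each fibration class) an infinite subgroup of $\Or(\Pic(X))$ acting by automorphisms, so in particular the stabilizer of at least one fibration class cannot be everything finite. **Then I would** apply \Cref{prop:shioda.tate} to the relevant fibration $|F|$ whose stabilizer in $\Aut(X)$ is infinite, which gives
\[
\rho(X) - 2 - \rk\bigl((F^\perp/\langle F\rangle)_{root}\bigr) > 0,
\]
so that $\rk((F^\perp/\langle F\rangle)_{root}) \le \rho(X) - 3 \le 20 - 3 = 17$, using the standard bound $\rho(X) \le 20$ for a K3 surface over a field of characteristic $0$. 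Combined with the embedding from the first step, this yields $\rk E'(X) \le 17$.

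**The main obstacle** I anticipate is justifying that some elliptic fibration on $X$ has infinite stabilizer in $\Aut(X)$ — i.e.\ connecting the hypothesis ``two distinct elliptic fibrations'' to the positivity condition in \Cref{prop:shioda.tate}. One clean way is to argue that the presence of two distinct isotropic classes $F_1, F_2$ forces $\Aut(X)$ to be infinite (this is classical: a K3 surface with at least two elliptic fibrations, equivalently with an isotropic class that is not unique up to automorphism, has infinite automorphism group), and then to observe that an infinite automorphism group must stabilize \emph{some} fibration class with infinite stabilizer, since there are only finitely many fibration classes up to the finite obstruction and the Weyl group action. Care is needed to rule out the degenerate scenario in which \emph{every} fibration has finite stabilizer yet $\Aut(X)$ is nonetheless infinite; here one uses that the $(-2)$-curves in $E'(X)$, being orthogonal to all fibrations, genuinely sit inside the root part of each $F^\perp/\langle F\rangle$, so the bound $\rk E'(X) \le \rk((F^\perp/\langle F\rangle)_{root})$ holds for \emph{every} fibration, and it then suffices to exhibit \emph{one} fibration with infinite stabilizer to conclude.
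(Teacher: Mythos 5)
Your first step---the embedding $E'(X)\hookrightarrow (F^\perp/\langle F\rangle)_{root}$ for any elliptic fibration $|F|$---is exactly the paper's, and is correct. The gap is in the second half: the claim that two distinct elliptic fibrations force $\Aut(X)$ to be infinite is false, and with it the existence of a fibration with infinite stabilizer. Elliptic K3 surfaces of Picard rank $2$ always have finite automorphism group (cf.\ \cite[Example~15.2.11.(ii)]{huybrechts.k3.surfaces}, quoted in \Cref{prop:conditions.X.unique.elliptic.fibration}), yet many of them admit two distinct elliptic fibrations---the paper itself constructs such surfaces in \Cref{prop:cover}. Another counterexample appears in the proof of \Cref{thm:K3.surfaces.unique.elliptic.fibration}: the K3 surface with $\Pic(X)\cong\langle 4\rangle\oplus D_4$ has finite automorphism group and precisely three elliptic fibrations. (Also, the ``parabolic reflections'' you invoke do not exist: an isotropic class has no associated reflection in $\Or(\Pic(X))$; the infinite stabilizers in \Cref{prop:shioda.tate} come from Mordell--Weil translations, whose existence is precisely what is at issue.) Consequently you cannot in general conclude $\rk\bigl((F^\perp/\langle F\rangle)_{root}\bigr)\le\rho(X)-3$, and your argument does not close.

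The repair is a case distinction that makes the hard step unnecessary except at the top rank. Since $F^\perp/\langle F\rangle$ has rank $\rho(X)-2$, the embedding alone gives $\rk E'(X)\le\rho(X)-2$, which already yields the bound $17$ whenever $\rho(X)\le 19$. Only for $\rho(X)=20$ does one need the strict inequality $\rk E'(X)\le\rho(X)-3$, and there the paper does not argue via two fibrations at all: it invokes the theorem of Shioda--Inose that a K3 surface of maximal Picard rank always admits \emph{some} elliptic fibration with positive Mordell--Weil rank, and applies \Cref{prop:shioda.tate} to that fibration. Your closing remark that it suffices to exhibit one fibration with infinite stabilizer is the right instinct, but such a fibration is only guaranteed (and only needed) in the $\rho(X)=20$ case, and its existence there rests on Shioda--Inose rather than on the hypothesis of two fibrations.
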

\begin{proof}
 Let $F$ be the fibre class of an elliptic fibration $|F|:X\to\PP^1$. Since $X$ admits another elliptic fibration $|F'|$, which must satisfy $F'\cdot F>0$, $F$ cannot lie in $E'(X)$. It follows that $E'(X)$ embeds into $(F^\perp/\langle F\rangle)_{root}$.
 
 The Shioda--Tate formula implies that $\rk E'(X)\leq \rho(X)-2$ with strict inequality if the Mordell--Weil group of $|F|$ is infinite. The rank bound $\rk E'(X)\leq 17$ follows immediately if $\rho(X)\leq 19$, and also if $\rho(X)=20$, using that K3 surfaces with maximal Picard rank always admit an elliptic fibration with positive Mordell--Weil rank (cf. \cite[Theorem~5]{shioda.inose}).
\end{proof}

\begin{proposition}\label{prop:piet}
 The fundamental group $\piet((X\smallsetminus\Delta)_{\ov k})$ is finite.
\end{proposition}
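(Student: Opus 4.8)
The plan is to pass to the complex topological fundamental group, contract $\Delta$ to a K3 surface with du Val singularities, and identify the complement with the smooth locus of a Calabi--Yau orbifold, whose fundamental group is governed by the Beauville--Bogomolov decomposition. Since $X\smallsetminus\Delta$ is defined over $\ov k\subset\CC$ and, in characteristic $0$, the geometric étale fundamental group is the profinite completion of $\pitop$ of the complex points, it suffices to show that $G\coloneqq\pitop((X\smallsetminus\Delta)(\CC))$ is finite. Because $E'(X)$ is a root lattice, the $(-2)$-curves comprising $\Delta$ meet in $ADE$ configurations, and contracting each connected component gives a proper birational morphism $c\colon X\to\ov X$ onto a normal projective surface with du Val singularities $p_1,\dots,p_s$, inducing an isomorphism $X\smallsetminus\Delta\xrightarrow{\sim}\ov X\smallsetminus\{p_1,\dots,p_s\}=:\ov X^{\circ}$. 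As du Val singularities are crepant, $K_{\ov X}=0$, so $\ov X$ is a K3 surface with rational double points.

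Next I would record two structural facts. First, $\ov X$ is simply connected: $X$ is simply connected, being a K3 surface, and $c$ is a proper surjection with connected fibres, so $\pitop(\ov X)=1$. Second, analytically each $p_i$ is a quotient singularity $\CC^2/G_i$ with $G_i\subset\mathrm{SL}_2(\CC)$ a finite binary polyhedral group whose link has fundamental group $G_i$. Endowing $\ov X$ with its canonical orbifold (smooth Deligne--Mumford) structure $\mathcal X$, an isomorphism over $\ov X^{\circ}$ with stabiliser $G_i$ at $p_i$, a van Kampen argument along the links identifies $G=\pitop(\ov X^{\circ})$ with the orbifold fundamental group $\pi_1^{\mathrm{orb}}(\mathcal X)$. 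Since $G_i\subset\mathrm{SL}_2(\CC)$ preserves the holomorphic symplectic form, the latter descends to a nowhere-vanishing section of $K_{\mathcal X}$, so $\mathcal X$ is a compact Calabi--Yau surface orbifold.

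The finiteness of $G$ then follows from the Beauville--Bogomolov decomposition for $\mathcal X$, in its orbifold form via Yau's theorem: the orbifold universal cover of a compact Calabi--Yau surface orbifold is either a K3 surface, in which case $\pi_1^{\mathrm{orb}}(\mathcal X)$ is finite, or the flat plane $\CC^2$. It remains to rule out this flat case. There, $G$ is a crystallographic group, and a torsion-free finite-index (Bieberbach) subgroup yields a complex $2$-torus $A$ together with a finite étale morphism $A^{\circ}\to\ov X^{\circ}$, where $A^{\circ}$ is the complement of a finite set in $A$.

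To exclude this I would use, for the only time, the existence of two distinct elliptic fibrations $\varphi,\varphi'\colon X\to\PP^1$ with fibre classes $F,F'$. Since $F\cdot F'>0$, the combined morphism $(\varphi,\varphi')\colon X\to\PP^1\times\PP^1$ is generically finite onto its image $\PP^1\times\PP^1$; restricting to $\ov X^{\circ}$ and precomposing with the finite étale map $A^{\circ}\to\ov X^{\circ}$ produces a dominant, generically finite rational map $A\dashrightarrow\PP^1\times\PP^1$. Such a map exhibits $\CC(A)$ as a finite extension of a purely transcendental field, forcing $A$ to be unirational and contradicting $\kappa(A)=0$. Hence the flat case cannot occur and $G=\pi_1^{\mathrm{orb}}(\mathcal X)$ is finite, which proves the proposition. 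The conceptual crux, and the step I expect to require the most care, is the orbifold Beauville--Bogomolov input together with the exclusion of the flat case; the contraction and the comparison with $\piet$ are routine. It is precisely here that the hypothesis of two fibrations is indispensable, since a single elliptic fibration does not suffice to rule out a Kummer-type uniformisation.
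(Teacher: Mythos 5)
Your reduction to $\pitop((X\smallsetminus\Delta)_\CC)$, the contraction of $\Delta$ to a K3 surface with du Val singularities, and the dichotomy ``finite fundamental group or covered by a complex torus away from the singular points'' all match the paper, which gets the dichotomy from the Catanese--Keum--Oguiso theorem. The gap is in your exclusion of the torus case. You claim that composing the finite cover $A^{\circ}\to\ov X^{\circ}$ with $(\varphi,\varphi')\colon X\to\PP^1\times\PP^1$ yields a dominant, generically finite rational map $A\dashrightarrow\PP^1\times\PP^1$ and that this forces $A$ to be unirational. That implication is false: unirationality requires a dominant map \emph{from} a rational variety \emph{to} $A$, i.e.\ an embedding of $\CC(A)$ \emph{into} a purely transcendental field, whereas you have only exhibited $\CC(A)$ as a finite \emph{extension} of one. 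Abelian surfaces admit such maps in abundance --- e.g.\ $E_1\times E_2\to\PP^1\times\PP^1$ via the degree-$2$ quotients $E_i\to\PP^1$ --- so no contradiction with $\kappa(A)=0$ arises. Your closing remark that two fibrations are ``indispensable'' to rule out Kummer-type uniformisation is exactly backwards: Kummer surfaces of products of elliptic curves carry many elliptic fibrations and are finitely covered by an abelian surface away from the $16$ nodes, so the torus case cannot be excluded by the existence of two fibrations alone.

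The paper closes this case differently. When the torus case occurs, one is in the situation classified by Garbagnati: a finite group $G$ acting on an abelian surface $T$ with $X$ the minimal resolution of $T/G$, and the lattice $F_G$ spanned by the exceptional curves is forced to lie inside $E'(X)$ (because the cover is \'etale over $X\smallsetminus\Delta$). Garbagnati's classification gives either $\rk F_G\ge 18$, which contradicts the bound $\rk E'(X)\le 17$ from \Cref{lem:bound} --- a bound that does use the second elliptic fibration --- or $F_G=A_1^{\oplus 16}$, i.e.\ $X$ is a Kummer surface, which is excluded by the authors' earlier result that $(X\smallsetminus\Delta)_\CC$ is simply connected for Kummer surfaces (there the $16$ nodal classes do not all lie in $E'(X)$). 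You would need to supply an argument of this kind; the function-field argument you propose does not work.
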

\begin{proof}
 One has $\piet((X\smallsetminus\Delta)_{\ov k})\simeq\piet((X\smallsetminus\Delta)_\CC)$ \cite{landesman}, and the latter is the profinite completion of the topological fundamental group $\pitop((X\smallsetminus\Delta)_\CC)$. It suffices to show that $\pitop((X\smallsetminus\Delta)_\CC)$ is finite.

 Because the over-exceptional lattice is a root lattice, we can contract the curves in $\Delta$ to canonical singularities and obtain a normal surface $X_0$. The minimal resolution $X\to X_0$ restricts to an isomorphism from $X\smallsetminus \Delta$ to the smooth locus $X_0^\mathrm{sm}$ of $X_0$. By a theorem of Catanese--Keum--Oguiso \cite[Theorem~A]{catanese.oguiso.keum} (which proved a conjecture of De-Qi Zhang), 
 \begin{enumerate}
  \item  $\pitop((X_0^\mathrm{sm})_\CC)$ is finite, or
  \item there exists a two-dimensional complex torus $T$ and a finite morphism $f:T\to(X_0)_\CC$ (as complex analytic space) which is unramified over $(X_0^\mathrm{sm})_\CC$. 
 \end{enumerate}
We assume (2) and derive a contradiction.

It follows from the Riemann Existence Theorem \cite[XII, Théorème~5.1]{SGA1}, that $T$ is in fact an abelian surface and $f$ is a morphism of algebraic varieties. Moreover, the proof of \cite[Theorem~A]{catanese.oguiso.keum} gives that $f$ is realised as the normalisation of the fibre product of an elliptic fibration $(X_0)_\CC\to\PP^1_\CC$ with a finite Galois cover $C\to\PP^1_\CC$ and thus it is itself a Galois cover. Denote the Galois group of $f$ by $G$. Because $T$ is minimal, the birational action of $G$ on $T$ is in fact regular.

We are thus in the same situation as described by Garbagnati in \cite{garbagnati}: we have a finite group of automorphisms $G$ of the abelian surface $T$ such that the minimal model $X$ of $T/G$ is a K3 surface. Let $F_G$ be the sublattice of $\Pic(X)$ spanned by the exceptional curves in the resolution $X \to T/G$. Note that the union of these curves has to lie in $\Delta$, since the restriction of $f$ to $(X_0)^{\mathrm{sm}} \cong X \smallsetminus \Delta$ is étale. Thus $F_G$ is a sublattice of $E'(X)$. A byproduct of Garbagnati's classification \cite[Proposition~4.3]{garbagnati} is that either $\rk F_G\geq 18$ or $F_G=A_1^{\oplus 16}$. The former is impossible by \Cref{lem:bound}. The latter is impossible because this would mean that $X$ is a Kummer surface and we showed in \cite[Corollary A, Theorem C.(3)]{gvirtz.mezzedimi.hilbert} that $(X\smallsetminus\Delta)_\CC$ is simply connected for all Kummer surfaces.
\end{proof}

\begin{proposition}\label{prop:cover}
 Let $X$ be an elliptic K3 surface over $\ov k$. For every large enough prime $p$, there exist a K3 surface $Z$ over $\ov k$ admitting at least two distinct elliptic fibrations such that $\rk E'(Z) \leq 3$ and a dominant rational map $\varphi\colon Z\dashrightarrow X$ of degree $p^2$.
\end{proposition}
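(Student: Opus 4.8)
The plan is to realise $Z$ as a Tate--Shafarevich twist of $X$ and $\varphi$ as multiplication by $p$. Fix an elliptic fibration $|F|$ on $X$ with Jacobian $J\to\PP^1$, so that $X$ determines a class $[X]\in\Sh(J)$. Since $\Sh(J)\cong\Br(J)\cong(\QQ/\ZZ)^{22-\rho(X)}$ is divisible, I can choose $\beta\in\Sh(J)$ with $p\beta=[X]$; let $Z$ be the elliptic K3 surface with $[Z]=\beta$. Then $\mathrm{Jac}^p(Z)$ has class $p\beta=[X]$ and is therefore isomorphic to $X$, and the multiplication-by-$p$ map on smooth fibres extends to a dominant rational map $\varphi\colon Z\cong\mathrm{Jac}^1(Z)\dashrightarrow\mathrm{Jac}^p(Z)\cong X$ of degree $p^2$. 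This produces the cover; what remains is entirely about the geometry of $Z$, and this is where $p$ will have to be large.

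Next I would identify $\Pic(Z)$. Writing $\Pic(J)=U\oplus W$, with $U$ spanned by the fibre and the zero section and $W$ the negative definite frame lattice, the twist of order $d\coloneqq\mathrm{ord}(\beta)$ has the same reducible fibres and the same frame $W$, while the fibre–multisection plane is replaced by a rank-$2$ even lattice of discriminant $\pm d^2$ glued to $W$ according to the class $\beta$. In particular $\disc\Pic(Z)=d^2\disc\Pic(J)$ and $\rho(Z)=\rho(X)$, and for $p$ large $d$ is divisible by $p$; it is exactly this large prime factor that will give us the room we need. The fibre class $F$ is still a fibration on $Z$, so $Z$ is elliptic; to obtain a second, genuinely different fibration it suffices to produce one further primitive isotropic class.

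The bound on the over-exceptional lattice is the heart of the matter. As in the proof of \Cref{lem:bound}, every elliptic fibration $F'$ of $Z$ gives an embedding $E'(Z)\hookrightarrow((F')^\perp/\langle F'\rangle)_{root}$, so it is enough to exhibit a single fibration $F'$ whose reducible fibres span a root lattice of rank at most $3$. I would search for $F'$ among the classes of the form $v=((1,1),w)$ with $w\in W$ and $w^2=-2d$: such a $v$ is primitive and isotropic, and a short computation (using that $d$ is large to rule out roots with a nonzero fibre–multisection component) shows that its frame root lattice is precisely $(w^\perp\cap W)_{root}$, up to a bounded correction coming from the gluing. The whole problem thus reduces to choosing a representation $w$ of $-2d$ by $W$ that is orthogonal to as few roots of $W$ as possible.

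The hard part, and the only place where $p$ must be large, is this final arithmetic step. The finitely many $(-2)$-roots of $W$ cut out finitely many hyperplanes $\rho^\perp$ in $W\otimes\QQ$, and I want a vector $w$ of norm $-2d$ missing all of them. Since $W$ is fixed, the number of representations of $-2d$ grows with $d$, whereas those forced onto a given root hyperplane form a lower-order family, so an equidistribution/counting argument should yield, for all large $p$, a $w$ whose orthogonal complement in $W$ contains no roots; together with the gluing correction this gives $\rk E'(Z)\le 3$. (When $\rho(X)$ is small the frame $W$ already has rank at most $3$ and $\rk E'(Z)\le\rho(X)-2\le3$ is automatic, while a second fibration exists because $\Pic(Z)$ is indefinite of rank $\ge 2$; the counting is therefore only needed in high Picard rank, precisely where $W$ represents $-2d$ abundantly.) Making the resulting $v$ nef by a reflection yields the required second fibration with root frame of rank $\le 3$, completing the proof.
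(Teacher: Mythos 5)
Your construction of $Z$ and $\varphi$ coincides with the paper's: divide $[X]$ by $p$ in the divisible group $\Sh(J)$ and use the multiplication-by-$p$ map $\mathrm{Jac}^1(Z)\dashrightarrow\mathrm{Jac}^p(Z)$ of degree $p^2$, with $\disc\Pic(Z)=d^2\disc\Pic(J)$. One caveat already here: you must \emph{choose} $\beta$ of order divisible by $p$ (the paper takes $[Z]$ of order exactly $p\cdot\mathrm{ord}[X]$); a solution of $p\beta=[X]$ need not have large order, and without that the discriminant does not grow and nothing later works. From this point on your argument diverges from the paper's and has genuine gaps.

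The most serious gap is the existence of a \emph{second} elliptic fibration on $Z$. Your parenthetical claim that ``a second fibration exists because $\Pic(Z)$ is indefinite of rank $\ge 2$'' is false: the lattice $\begin{pmatrix}0&n\\n&-2\end{pmatrix}$ is indefinite of rank $2$ and carries a unique fibration (its two isotropic rays are identified by the reflection in the $(-2)$-vector), and \Cref{thm:K3.surfaces.unique.elliptic.fibration} lists $92$ further such lattices in ranks $3$ to $12$. The paper handles this by quoting Nikulin's finiteness theorem --- for fixed $\rho\ge 3$ there are only finitely many Picard lattices of K3 surfaces with a unique elliptic fibration --- so that the strictly growing discriminant eventually forces two fibrations, and by a separate twist-by-$m$ argument in rank $2$ that changes the $-2$ entry to $-2m^2$ while preserving $p[Z']=[X]$. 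Nothing in your proposal substitutes for this. Second, your counting argument for $\rk E'(Z)\le 3$ is not a proof: the candidate vector $((1,1),w)$ with $w^2=-2d$ is not isotropic (the $U$-component must have norm $2d$); it lives in $\Pic(J)=U\oplus W$ rather than in the index-$d$ sublattice $\Pic(Z)$, which in general contains no copy of $U$; the representation numbers of a fixed definite lattice $W$ at $-2d$ can vanish for infinitely many $d$ (local obstructions, especially when $\rk W\le 4$); the passage from ``$w$ orthogonal to no root of $W$'' to ``the new fibration on $Z$ has frame root lattice of rank $\le 3$'' rests on an unproved ``bounded correction from the gluing''; and you would still need the new isotropic class to lie in a different Weyl-group orbit from $F$. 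The paper sidesteps all of this by citing the finiteness of the list $\cS$ of rank-$\ge 6$ Picard lattices with $E'\ne 0$ from the authors' earlier work, again played off against the growing discriminant, together with the trivial bound $\rk E'\le\rho-2\le 3$ in rank $\le 5$. Your equidistribution idea might be salvageable in high rank with genuine analytic input, but as written the proposal establishes neither the second fibration nor the bound on $E'(Z)$.
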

\begin{proof}
 Fix an elliptic pencil $|F|$ on $X$ and denote by $J=JX$ the corresponding Jacobian fibration. Consider the class $[X]$ in $\Sh(J)\simeq\Br(J)$ and its order $d$. Since $\Sh(J)$ is divisible, for every prime $p$ we can find a K3 surface $Z$ such that $p[Z]=[X]$ and $[Z]$ has order $pd$ in $\Sh(J)$. Furthermore, the natural map $\varphi\colon Z\dashrightarrow X$ has degree $p^2$.
 
 We have $\rho(Z)=\rho(X)>1$ and by \cite[\S11, (4.5)]{huybrechts.k3.surfaces}, \[\disc(\Pic(Z))=p^2d^2\disc(J)=p^2\disc(X).\] In particular, the lattice $\Pic(Z)$ never hits an isomorphism class twice as $p$ varies over all primes.
 
 If $\rho(X)>2$, then by \cite[Theorem~5.1]{nikulin.elliptic} there are only finitely many lattices $L$ of rank $\rho(X)$ such that there exists a K3 surface $Y$ with a unique elliptic fibration and $\Pic(Y)\simeq L$. Hence the surface $Z$ admits two elliptic fibrations when $p$ is sufficiently large. Moreover, by enlarging $p$ further, one can arrange that $\rk E'(Z) \leq 3$. This is obvious when $\rho(Z)\leq 5$ from \Cref{lem:bound}. When $\rho(Z)\geq 6$, one knows from \cite[Remark~5.9(2)]{gvirtz.mezzedimi.hilbert} that $E'(Z)=0$ as long as $\Pic(Z)$ lies outside a finite list $\cS$ of lattices.
 
 If $\rho(X)=2$, then $E'(Z)=0$ as soon as $Z$ admits a second elliptic fibration (again by \Cref{lem:bound}). Hence it suffices to consider $Z$ of Picard rank $2$ with a unique elliptic fibration. By \cite[\S4.7]{van.geemen.remarks}, such $Z$ has a Picard lattice of the form
 $$\begin{pmatrix}
    0 & n\\
    n &-2
\end{pmatrix}$$
where $n$ is the order of $[Z]$ in $\Sh(J)$ (i.e.\ equal to $pd$).

When $p$ is sufficiently large and coprime to $d$, there exists a positive integer $m$ coprime to $p$ such that $m\equiv 1\pmod d$ and $m^2\not\equiv 1\pmod n$. Multiplying $[Z]$ by $m$ produces a K3 surface $Z'$ with Picard lattice of the form
 $$\begin{pmatrix}
    0 & n\\
    n &-2m^2
\end{pmatrix}.$$
(cf. \cite[Lemma~3.8]{shinder.zhang}).
Because $-2m^2\not\equiv-2\pmod n$, $Z'$ admits at least two elliptic fibrations. Furthermore, \[p[Z']=pm[Z]=m[X]=[X]\in\Sh(J).\] Replacing $Z$ by $Z'$ finishes the proof.
\end{proof}

\begin{proof}[Proof of the main theorem]
 By \Cref{prop:piet}, the fundamental group $\piet(X\smallsetminus \Delta)$ is finite. By \Cref{prop:cover}, there exists a K3 surface $Z$ over $\ov k$ with two distinct elliptic fibrations, $\rk E'(Z)\leq 3$ and a rational map $\varphi\colon Z\dashrightarrow X$ of degree $p^2$ coprime to $|\piet(X\smallsetminus \Delta)|$.
 
 After a finite extension of $k$, we can assume that $Z$ and $\varphi$ are defined over $k$, that $\Pic(Z)\simeq\Pic(Z_{\ov k})$ and that $Z(k)$ is Zariski dense in $Z$ \cite{bogomolov.tschinkel.density}. By \cite[Props. 4.1 and 4.6]{gvirtz.mezzedimi.hilbert} and \cite[Theorem~4.3]{keum.zhang.fundamental}, the condition $\rk E'(Z)\leq 3$ implies that $Z$ has the Hilbert Property over $k$.
 
 We argue that the Hilbert Property of $Z$ descends to $X$, that is $X$ also has the Hilbert Property over $k$.
 
 Assume by contradiction that there exists a finite collection of finite, surjective morphisms $\pi_i\colon Y_i\to X$, $i=1,\dots,r$, from integral, normal varieties $Y_i$ over $k$ such that
 \begin{equation}\label{eq:HP} X(k) = C(k) \cup \bigcup_i{\pi_i(Y_i(k))}\end{equation}
 for a proper closed subvariety $C\subset X$.
 
 By \cite[Proof of Theorem~1.1]{demeio.elliptic.fibrations}, (\ref{eq:HP}) still holds verbatim after we restrict to the subcollection $I\subset\{1,\dots,r\}$ of those $\pi_i$ that are unramified outside $\Delta$, i.e.\ $\pi_i\in\piet(X\smallsetminus\Delta)$ and hence $\deg(\pi_i)$ is coprime to $p$. The coprimality implies that the tensor product of function fields $k(Y_i)\otimes_{k(X)} k(Z)$ is a field (see \cite[Proposition~2.1]{cohn}) and the base change
 \[\begin{tikzcd}
    Y_i' \arrow[dashed]{r} \arrow[dashed]{d}{\pi_i'} &Y_i \arrow{d}{\pi_i}\\
    Z \arrow[dashed]{r}{\varphi} &X
\end{tikzcd}\]
is integral for all $i\in I$.

If $P\notin \varphi^{-1}(C)(k)$, then $\varphi(P)\in\pi_i(Y_i(k))$ for some $i\in I$, hence $P\in\pi_i'(Y_i'(k))$. In particular
$$Z(k)\subseteq \varphi^{-1}(C) \cup \bigcup_i {\pi_i'(Y_i'(k))},$$
contradicting the fact that $Z$ has the Hilbert Property over $k$.
\end{proof}

\begin{remark}\label{rem:bounded}
    Keeping track of the required extension $k'/k$, one argues as follows that there is a (non-explicit) uniform bound on $[k':k]$ that does not depend on $|F|$, $X$ or $k$.

    By \cite[Theorem~1.3]{lai.nakahara} $\Pic(X_{k_1})\simeq\Pic(X_{\ov k})$ for an extension $k_1/k$ of uniformly bounded degree.
    
    Moreover, the set of lattices $\cS$ is finite, hence it has an element with largest discriminant $d$. Likewise, the order of $\piet(X\smallsetminus\Delta)$ is also uniformly bounded since $\piet(X\smallsetminus\Delta)$ acts symplectically (see \cite[Proposition~5.1]{garbagnati}) and faithfully on a K3 surface and consequently has to embed into one of the $11$ groups listed in \cite[(0.4)]{mukai}. Thus the prime $p$ in \Cref{prop:cover} can be chosen uniformly.
    
    Write $\rho\coloneqq\rho(X_{\ov k})$. There is a uniformly bounded extension $k_2/k_1$ such that
    \[\Br(X_{\ov k})[p]^{\Gal(\ov k/k_2)}\simeq \Br(X_{\ov k})[p]\simeq (\ZZ/p\ZZ)^{22-\rho}.\]
    Let $\partial\colon \Br(X_{\ov k})[p]\to\RH^2(\Gal(\ov k/k_2),\Pic(X_{\ov k}))$ be the connecting homomorphism from the Hochschild--Serre spectral sequence (see \cite[Theorem~1.3]{ct.skoro}).
    A class $\alpha$ in $\Br(X_{\ov k})[p]$ comes from $\Br(X_{k_2})$ if and only if its image $\partial\alpha$ in
    \begin{align*}
        \RH^2(\Gal(\ov k/k_2),\Pic(X_{\ov k}))[p]
        \simeq\RH^1(\Gal(\ov k/k_2),(\QQ/\ZZ)^\rho)[p]\simeq\Hom(\Gal(\ov k/k_2),(\ZZ/p\ZZ)^\rho)
    \end{align*}
    is zero. Taking $k_3/k_2$ defined by $\ker(\partial\alpha)\subset\Gal(\ov k/k_2)$, one sees that $\alpha$ comes from $\Br(X_{k_3})$.
    
    It follows that $\phi:Z\dashrightarrow X$ can be defined over $k_3$. Now a uniformly bounded extension $k'/k_3$ as in \cite[Remark~6.5]{gvirtz.mezzedimi.hilbert} ensures that $Z_{k'}$, hence $X_{k'}$, has the Hilbert Property.
\end{remark}

\section{K3 surfaces with a unique elliptic fibration} \label{sec:unique.elliptic.fibration}

Throughout this section, let $X$ be a K3 surface over an algebraically closed field $\ov k$ of characteristic zero. The goal is to classify K3 surfaces with a unique elliptic fibration. 

The elliptic fibrations on $X$ correspond to primitive, isotropic, nef vectors in $\Pic(X)$, so the property of having a unique elliptic fibration depends only on the Picard lattice of $X$. Since K3 surfaces of Picard rank $1$ admit no elliptic fibration, and the classification of K3 surfaces of Picard rank $2$ admitting a unique elliptic fibration is well-known (see e.g. \cite[§4.7]{van.geemen.remarks}), we will mainly focus on the case of Picard rank $\rho \ge 3$.

We note that, if $X$ admits a unique elliptic fibration $|F|$, then all automorphisms of $X$ preserve the fibration $|F|$. In particular $X$ has \emph{zero entropy} (cf. \cite[Definition~5.1]{brandhorst.mezzedimi.borcherds}), and by \cite[Theorem~1.1]{yu.entropy} or  \cite[Theorem~5.3]{brandhorst.mezzedimi.borcherds}, together with the classification of K3 surfaces with finite automorphism group by Nikulin and Vinberg, we have a complete classification of Picard lattices of K3 surfaces of zero entropy. 

We will now present three criteria to decide whether a given K3 surface of zero entropy admits a unique elliptic fibration. We will then go through the known list of Picard lattices of K3 surfaces of zero entropy in \Cref{thm:K3.surfaces.unique.elliptic.fibration} to complete the classification. We refer to \cite{nikulin.bilinear} (or \cite[Section~2]{gvirtz.mezzedimi.hilbert}) for the basic definitions and properties of lattice theory.

\begin{proposition} \label{prop:unique.elliptic.fibration.overlattice.direct.summand}
Let $X$ and $Y$ be K3 surfaces, and assume that $\Pic(Y)$ is an overlattice of $\Pic(X)$. If $Y$ admits two distinct elliptic fibrations, then $X$ admits two distinct elliptic fibrations as well.
\end{proposition}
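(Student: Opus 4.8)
The plan is to reduce the statement to a purely lattice-theoretic one, exploiting that elliptic fibrations on a K3 surface correspond to primitive, isotropic, nef classes and that the nef cone is a Weyl chamber. Write $L\coloneqq\Pic(X)$ and $L'\coloneqq\Pic(Y)$, so that $L\subseteq L'$ is a finite-index inclusion of even lattices of signature $(1,\rho-1)$ inducing an identification $V\coloneqq L\otimes\RR=L'\otimes\RR$ with a common positive cone $\mathcal{C}$ (fixing the component containing the ample classes). Since $L\subseteq L'$ and the forms agree, every $(-2)$-vector of $L$ is a $(-2)$-vector of $L'$; hence the root system $R_L$ is contained in $R_{L'}$, and the Weyl group $W_L=W(X)$ is a subgroup of $W_{L'}=W(Y)$.

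First I would record the geometric input. Two distinct elliptic fibrations on $Y$ provide two non-proportional (distinct, both primitive and nef) primitive isotropic classes $F_1,F_2\in L'$ lying in the nef cone $\Nef(Y)=\overline{D'}$, the closure of a Weyl chamber $D'$ for $W_{L'}$ inside $\mathcal{C}$. Because $R_L\subseteq R_{L'}$, the hyperplane arrangement $\{r^\perp:r\in R_{L'}\}$ refines $\{r^\perp:r\in R_L\}$, so no wall $r^\perp$ with $r\in R_L$ can cut the $W_{L'}$-chamber $D'$; thus $D'$ is contained in a unique $W_L$-chamber $D$, and $F_1,F_2\in\overline{D'}\subseteq\overline{D}$.

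Next I would clear denominators to land inside $L$. Let $e$ be the exponent of the finite group $L'/L$, so that $eL'\subseteq L$ and $eF_i\in L$. Let $G_i\in L$ be the primitive vector on the ray $\RR_{>0}F_i$; then $G_i$ is primitive in $L$, isotropic, and, being a positive multiple of $F_i$, lies in the closed cone $\overline{D}$. As $F_1,F_2$ are non-proportional, so are $G_1,G_2$. Finally, since $W(X)$ acts simply transitively on the Weyl chambers in $\mathcal{C}$ and $\Nef(X)$ is one such chamber, I would choose $w\in W(X)$ with $w(\overline{D})=\Nef(X)$; then $w(G_1),w(G_2)$ are non-proportional primitive isotropic classes in $\Nef(X)$, each of which defines an elliptic fibration by the standard Riemann--Roch argument recalled at the start of Section~2. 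This produces two distinct elliptic fibrations on $X$.

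The main obstacle is the compatibility of the two nef cones. One must ensure that the positive cones of $X$ and $Y$ are aligned (they share the real space $V$, so one simply fixes the common component $\mathcal{C}$) and that the nef chamber $D'$ of $Y$ refines into a single chamber of $W(X)$ rather than being split by a wall $r^\perp$ with $r\in R_L$; this is precisely the inclusion $R_L\subseteq R_{L'}$ applied to $D'$. Everything else—clearing denominators, passing to primitive representatives, and transporting back to $\Nef(X)$ via the Weyl group—is routine, and the concluding step that a primitive isotropic nef class is an elliptic fibration is the K3 dictionary already invoked throughout the paper.
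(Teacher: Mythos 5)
Your argument is correct, but it establishes the key containment of nef cones by a genuinely different mechanism than the paper. Both proofs reduce the statement to showing that, after adjusting by an isometry, $\Nef(Y)$ sits inside $\Nef(X)$ in the common real Picard space, and then transport the two non-proportional primitive isotropic nef rays of $Y$ to primitive isotropic nef classes of $X$. The paper obtains the containment geometrically: after composing the embedding with an element of the Weyl group of $\Pic(Y)$ so that an ample class of $X$ maps to an ample class of $Y$, it shows $\Eff(X)\subseteq\Eff(Y)$ using the genus formula and Riemann--Roch on integral curve classes, and then dualizes. You instead argue purely lattice-theoretically: since every $(-2)$-vector of $\Pic(X)$ is a $(-2)$-vector of $\Pic(Y)$, the wall-and-chamber decomposition for the larger root system refines the one for the smaller, so the ample chamber of $Y$ lies in a single chamber for $W(\Pic(X))$, which simple transitivity moves onto the ample cone of $X$. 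Your route leans on the standard facts that the ample cone of a K3 surface is a full chamber for the Weyl group of \emph{all} $(-2)$-classes and that this group acts simply transitively on chambers; the paper's route leans on the duality between the nef and effective cones. Both are legitimate, and yours has the mild advantage of making the proposition visibly a statement about hyperbolic lattices with a marked chamber. One cosmetic point common to both write-ups: the two components of the positive cone must first be aligned (compose the embedding with $-\mathrm{id}$ if necessary), since the Weyl group preserves each component; your parenthetical about fixing the component and the paper's appeal to the Weyl group both silently absorb this.
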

\begin{proof}
It suffices to show that the nef cone of $Y$ is a subcone of the nef cone of $X$. This is well-known, but let us briefly sketch a proof.

Let $A\in \Pic(X)$ be ample. If $\iota: \Pic(X)\hookrightarrow \Pic(Y)$ is an embedding, after applying an element of the Weyl group of $\Pic(Y)$ we may assume that $\iota(A)$ is ample on $Y$. Since the nef cone is dual to the effective cone, it suffices to show that $\iota$ induces an inclusion $\Eff(X)\subseteq \Eff(Y)$. For any class $C\in \Pic(X)$ of an integral curve on $X$, we have $C^2 \ge -2$ by the genus formula, and therefore by Riemann-Roch the image $\iota(C)\in \Pic(Y)$ is either effective or anti-effective. By assumption $\iota(A)$ is ample, and $\iota(A).\iota(C) = A.C>0$, so $\iota(C)$ is effective, as claimed.
\end{proof}

\begin{remark} \label{rk:findellipticfibration}
If $|F|$ is an elliptic fibration with a section on a K3 surface $X$, then $F\in \Pic(X)$ induces an orthogonal decomposition $\Pic(X)\cong U \oplus W$, where $U$ is generated by $F$ and the zero section, and $W\cong F^\perp / \langle F \rangle$.

Conversely, let $X$ be a K3 surface and assume that $\Pic(X)\cong U \oplus W$ for a negative definite lattice $W$. If $v$ is any primitive isotropic vector in $U$, up to the action of the Weyl group $W^{(2)}(\Pic(X))$ we may assume that $v$ is nef, and therefore it induces an elliptic fibration $|F|$ on $X$ such that $F^\perp / \langle F \rangle \cong W$. By \cite[Remark~11.1.4]{huybrechts.k3.surfaces}, the elliptic fibration has a section.
\end{remark}

For a definite lattice $W$, recall that the \emph{genus} of $W$ is the set of isometry classes of definite lattices $W'$ such that $U\oplus W \cong U\oplus  W'$.

\begin{proposition} \label{prop:conditions.X.unique.elliptic.fibration}
Let $X$ be a K3 surface of zero entropy, and assume $\Pic(X)\cong U\oplus W$. If $X$ admits a unique elliptic fibration, then:
\begin{enumerate}
    \item The lattice $W$ is unique in its genus;
    \item The lattice $W$ has no nontrivial overlattice.
\end{enumerate}
Moreover, if the automorphism group of $X$ is infinite, then $X$ admits a unique elliptic fibration if and only if (1) and (2) hold simultaneously.
\end{proposition}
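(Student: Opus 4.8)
My plan is to work entirely through the lattice-theoretic dictionary of \Cref{rk:findellipticfibration}: an elliptic fibration on $X$ is a primitive isotropic nef class $F\in\Pic(X)$, two such classes define the same fibration exactly when they lie in one orbit of $W^{(2)}(\Pic(X))$ on the positive cone, and so $X$ has a unique elliptic fibration if and only if $W^{(2)}(\Pic(X))$ acts transitively on primitive isotropic rays. Writing $U=\langle e,f\rangle$ with $e^2=f^2=0$, $e\cdot f=1$, I attach to each $F$ its divisibility $m=\mathrm{div}(F)$ (the minimal degree of a multisection, so $|F|$ has a section iff $m=1$) and its frame $N_F:=F^\perp/\langle F\rangle$. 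Denoting by $A_W$ the discriminant group of $W$, the argument rests on three standard lattice facts: (i) the frames of section fibrations are exactly the classes in the genus of $W$, since $U\oplus N_F\cong\Pic(X)\cong U\oplus W$; (ii) $|\disc N_F|=|\disc\Pic(X)|/m^2=|\disc W|/m^2$; and (iii) nontrivial even overlattices of $W$ correspond to nonzero isotropic subgroups of $A_W$, while a class of divisibility $m$ exists only if $A_W$ contains an isotropic element of order $m$, namely $F/m\bmod\Pic(X)$.

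The two necessary conditions require no hypothesis on $\Aut(X)$ and I prove them by contraposition. If $W$ is not unique in its genus, I pick $W'\not\cong W$ in the genus and use (i) with \Cref{rk:findellipticfibration} to produce a section fibration $|F'|$ on $X$ with $N_{F'}\cong W'$; as every isometry of $\Pic(X)$ sends frames to isometric frames, $|F'|$ and the standard fibration (frame $W$) lie in different $W^{(2)}$-orbits, so $X$ has at least two fibrations, giving (1). For (2), suppose $W$ has a nontrivial even overlattice, equivalently $A_W$ has a nonzero isotropic class of order $m\ge 2$; lifting it to a primitive $w\in W$ with $\mathrm{div}_W(w)=m$, the vector $F'=m\,e-\tfrac{w^2}{2m}\,f+w\in U\oplus W$ is primitive isotropic of divisibility $m$, and by (ii) its frame satisfies $|\disc N_{F'}|=|\disc W|/m^2\ne|\disc W|$, so $N_{F'}\not\cong W$ and $|F'|$ is a second fibration. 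Hence uniqueness forces both (1) and (2).

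For the converse, assume (1), (2) and that $\Aut(X)$ is infinite. By (iii) and (2) the group $A_W$ has no nonzero isotropic element, so every primitive isotropic class has divisibility $1$, i.e.\ every elliptic fibration on $X$ has a section; its frame then lies in the genus of $W$ by (i), which by (1) is the single class $W$. Thus every frame is isometric to $W$, and applying (i) once more, all primitive isotropic classes of $\Pic(X)$ form a single orbit under $\Or^+(\Pic(X))$. What remains is to show that this single $\Or^+(\Pic(X))$-orbit meets the nef cone in a single ray, that is, that it is one $W^{(2)}(\Pic(X))$-orbit.

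I expect this last step to be the crux, and it is exactly where the infinitude of $\Aut(X)$ is used. Since $W^{(2)}(\Pic(X))$ is normal in $\Or^+(\Pic(X))$, set $\Gamma=\Or^+(\Pic(X))/W^{(2)}(\Pic(X))$; the number of fibrations in the single orbit equals the index of the image of $\mathrm{Stab}_{\Or^+(\Pic(X))}(F)$ in $\Gamma$, so uniqueness amounts to the generation statement $\Or^+(\Pic(X))=W^{(2)}(\Pic(X))\cdot\mathrm{Stab}_{\Or^+(\Pic(X))}(F)$. To feed in the hypothesis I note that $X$ has only finitely many fibrations up to automorphism, so if $\Aut(X)$ is infinite then some fibration has infinite stabilizer in $\Aut(X)$; as all fibrations now share the frame $W$ and the Mordell--Weil rank depends only on the frame, \Cref{prop:shioda.tate} yields $\rho(X)-2-\rk(W_{root})>0$ and hence that $F$ itself has positive Mordell--Weil rank and infinite stabilizer. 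The plan is then to use these Mordell--Weil translations, which realise the unipotent part of $\mathrm{Stab}_{\Or^+(\Pic(X))}(F)$, together with the reflections generating $W^{(2)}(\Pic(X))$, to fill up $\Gamma$; establishing this generation is the main difficulty. When $\Aut(X)$ is finite it genuinely fails, since the nef cone is rational polyhedral and may carry several $\Or^+$-equivalent isotropic rays, which is why the hypothesis cannot be dropped.
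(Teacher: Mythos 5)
Your necessity direction is sound and essentially coincides with the paper's: non-uniqueness of $W$ in its genus yields a second section fibration with a non-isometric frame via \Cref{rk:findellipticfibration}, and a nontrivial overlattice of $W$ yields a primitive isotropic class of divisibility $m>1$ (your $F'=me-\tfrac{w^2}{2m}f+w$ is the paper's $v=nu+w$), which after a Weyl translation gives a sectionless, hence distinct, fibration. The first half of your converse also matches the paper: condition (2) forces every fibration to have a section, and condition (1) forces every frame to be isometric to $W$.

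The genuine gap is the final step, which you yourself flag as unproven: showing that the single $\Or^+(\Pic(X))$-orbit of primitive isotropic vectors is a single $W^{(2)}(\Pic(X))$-orbit, i.e.\ the generation statement $\Or^+(\Pic(X))=W^{(2)}(\Pic(X))\cdot\mathrm{Stab}(F)$. This is not a technical loose end but the heart of the converse, and your proposed route cannot close it, because nowhere in your converse do you use that $X$ has \emph{zero entropy} — only that $\Aut(X)$ is infinite. The generation statement is equivalent to saying that $\Aut(X)$ is, up to finite index, the stabilizer of $F$; for a positive-entropy surface with $\Pic(X)\cong U\oplus W$, $W$ unique in its genus and with no overlattice, conditions (1) and (2) hold and yet there are infinitely many elliptic fibrations, all with the same frame and infinite Mordell--Weil group, so Mordell--Weil translations plus reflections do not fill up $\Gamma$. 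The paper sidesteps the orbit computation entirely: it first shows via Nikulin's results on genera of root overlattices that $W$ is not a root overlattice (your inference that infinite $\Aut(X)$ forces some fibration to have infinite stabilizer is itself unjustified without zero entropy), deduces from \Cref{prop:shioda.tate} that \emph{every} elliptic fibration on $X$ has infinite stabilizer in $\Aut(X)$, and then invokes Oguiso's theorem that a zero-entropy K3 surface admits at most one elliptic fibration with infinite stabilizer. That single citation is exactly the ingredient your argument is missing, and completing your generation statement honestly would amount to reproving it.
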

\begin{proof}
By \Cref{rk:findellipticfibration} there exists an elliptic fibration $|F|$ with a section on $X$ such that $F^\perp / \langle F \rangle \cong W$.
Assume first that the lattice $W$ does not satisfy one of the two conditions in the statement. If $W$ is not unique in its genus, there exists a lattice $W'$, not isometric to $W$, such that $\Pic(X)\cong U \oplus W'$. By \Cref{rk:findellipticfibration} there is an elliptic fibration $|F'|$ on $X$ such that $F'^\perp / \langle F' \rangle \cong W'$. Since $W$ and $W'$ are not isometric, $|F|$ and $|F'|$ are distinct.

If instead $W$ has a nontrivial overlattice, there exists an integer $n>1$ and a nontrivial primitive vector $w\in W$ such that $W[w/n]$ is an even integral lattice. In particular $w^2 = -2k n^2$ for some $k\in \ZZ$. Choose any vector $u\in U$ of norm $2k$; then the vector $v=nu+w\in \Pic(X)$ is primitive, isotropic and it has divisibility $v.\Pic(X) \ge n$ by construction. By Riemann--Roch $v$ is effective up to a sign, and there exists an element $g$ in the Weyl group $W^{(2)}(\Pic(X))$ such that $F'\coloneqq g(v)\in \Pic(X)$ is nef.
Since the Weyl group acts as a group of isometries, $F'\in \Pic(X)$ has divisibility $\ge n>1$ as well and therefore it induces an elliptic fibration on $X$ without sections. In particular $|F|$ and $|F'|$ are distinct.

For the converse implication, assume that the automorphism group $\Aut(X)$ is infinite, and that (1) and (2) are satisfied. In particular $\rho(X)\ge 3$, since elliptic K3 surfaces of Picard rank $2$ have a finite automorphism group \cite[Example~15.2.11.(ii)]{huybrechts.k3.surfaces}. By \cite[Theorems~3.1.1~and~4.1.1]{nikulin.factor.groups} there is a lattice in the genus of $W$ that is not a \emph{root overlattice} (i.e., an overlattice of a root lattice), and since by assumption $W$ is unique in its genus, then $W$ itself is not a root overlattice.
Therefore the elliptic fibration $|F|$ has infinitely many sections by \Cref{prop:shioda.tate}. Let $|F'|$ be any elliptic fibration on $X$. If $|F'|$ has no section, then $F'\in \Pic(X)$ has divisibility $>1$ and therefore $\Pic(X)\cong U\oplus W$ admits a nontrivial overlattice. Since the discriminant group of $U$ is trivial, the orthogonal complement $W$ admits a nontrivial overlattice as well, a contradiction. Therefore $|F'|$ has a section and it induces a decomposition $\Pic(X)\cong U\oplus W'$ with $W'\cong {F'}^\perp / \langle F'\rangle$. Since by assumption $W'$ is unique in its genus, $W$ and $W'$ are isometric, and therefore $|F'|$ admits infinitely many sections by \Cref{prop:shioda.tate}. But $X$ has zero entropy, so by \cite[Theorem~1.6]{oguiso.entropy} $|F'|$ and $|F|$ are the same elliptic fibration. In particular $|F|$ is the only elliptic fibration on $X$.
\end{proof}

\begin{proposition} \label{prop:unique.elliptic.fibration.root.overlattice}
Let $X$ be a K3 surface, and assume that $X$ has zero entropy and an infinite automorphism group. If $X$ admits two distinct elliptic fibrations, then:
\begin{enumerate}
    \item There exists a root overlattice $N$ of rank $\rho(X)-2$ such that $\disc(\Pic(X))/\disc(N)$ is a square;
    \item If $n^2 = \disc(\Pic(X))/\disc(N)$, there exists an isotropic vector of order $n$ in the discriminant group of $\Pic(X)$.
\end{enumerate}
\end{proposition}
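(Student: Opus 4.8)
The plan is to extract the root overlattice from a Mordell--Weil rank zero fibration and to read both numerical conditions off the Tate--Shafarevich twist relating $X$ to its Jacobian. Write $|F_1|$ and $|F_2|$ for the two distinct fibrations. By \Cref{prop:shioda.tate}, the stabilizer of $F_i$ in $\Aut(X)$ is infinite if and only if the Jacobian of $|F_i|$ has positive Mordell--Weil rank. The zero entropy hypothesis, through Oguiso's theorem \cite[Theorem~1.6]{oguiso.entropy}, ensures that at most one elliptic fibration on $X$ can have infinite stabilizer in $\Aut(X)$; hence at least one of $|F_1|,|F_2|$, say $|F|$, has Mordell--Weil rank zero.

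Next I would pass to the Jacobian $J=JX$ of $|F|$. By \cite[Proposition~11.4.5]{huybrechts.k3.surfaces} and \cite[Theorem~4.3.20]{CossecDolgachevLiedtke}, $J$ is a K3 surface with $\rho(J)=\rho(X)$ and the same singular fibers, so its Mordell--Weil rank is again zero. As $|JF|$ has a section, \Cref{rk:findellipticfibration} yields an orthogonal splitting $\Pic(J)\cong U\oplus N$ with $N$ negative definite of rank $\rho(X)-2$; the vanishing of the Mordell--Weil rank means that $N_{root}$ has finite index in $N$, i.e.\ $N$ is a root overlattice of rank $\rho(X)-2$. Let $n$ be the order of $[X]$ in $\Sh(J)$. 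The discriminant computation of \cite[\S11, (4.5)]{huybrechts.k3.surfaces} used already in \Cref{prop:cover} gives $\disc(\Pic(X))=n^2\disc(\Pic(J))=n^2\disc(N)$, since $U$ is unimodular, so $\disc(\Pic(X))/\disc(N)=n^2$ is a square. This proves (1).

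For (2) I would identify this same integer $n$ with the divisibility of $F$ in $\Pic(X)$. The order of $[X]$ in $\Sh(J)$ is the index of the genus one fibration $|F|$, that is the minimal degree of a multisection, which in turn equals the divisibility of the isotropic class $F$; this \emph{period--index} equality is visible in the rank $2$ normal form of \cite[\S4.7]{van.geemen.remarks}. Since $F$ is primitive, isotropic and of divisibility $n$, the class $\tfrac{1}{n}F$ lies in $\Pic(X)^\vee$ and defines an element of order $n$ in the discriminant group, of discriminant form value $F^2/n^2=0$. This is the desired isotropic vector of order $n$, completing (2). (When $|F|$ already has a section one has $J=X$ and $n=1$, and both assertions are vacuous.)

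The main obstacle is the first step: deploying the zero entropy hypothesis to force one of the two fibrations to have Mordell--Weil rank zero, and in particular phrasing Oguiso's theorem intrinsically in terms of stabilizers in $\Aut(X)$ (as in \Cref{prop:shioda.tate}) so that it also covers fibrations without a section. A secondary subtlety lies in (2), where one must ensure that the order of $[X]$ in $\Sh(J)$ equals the divisibility of $F$ exactly, rather than merely dividing it.
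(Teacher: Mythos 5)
Your opening step coincides with the paper's: Oguiso's theorem together with the zero-entropy and infinite-automorphism hypotheses single out the unique fibration with infinite stabilizer, so a second fibration must have finite stabilizer, and \Cref{prop:shioda.tate} then makes the corresponding fibre lattice a root overlattice of rank $\rho(X)-2$. From there you diverge: you pass to the Jacobian $J$ of the finite-stabilizer fibration $|F|$ and read both numerical conditions off the class $[X]\in\Sh(J)$, whereas the paper never leaves $\Pic(X)$. It works with the divisibility $n$ of the fibre class $F'$ itself: the class $\tfrac1n F'$ is already the required isotropic element of order $n$ in the discriminant group (this is (2)), and adjoining it yields an overlattice $L\cong U\oplus N$ of index $n$ with $N={F'}^\perp/\langle F'\rangle$, whence $\disc(\Pic(X))=n^2\disc(N)$ (this is (1)). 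So the paper extracts both conclusions at once from the single vector $F'$, with no Tate--Shafarevich input. Your version of (1) is nonetheless sound: the splitting $\Pic(J)\cong U\oplus N$, the identification of $N$ as a root overlattice via equality of singular fibres and the Shioda--Tate formula, and the discriminant relation $\disc(\Pic(X))=n^2\disc(\Pic(J))$ are all correctly assembled.

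The genuine weak point is the one you flag yourself: for (2) you need the order of $[X]$ in $\Sh(J)$ to equal the divisibility of $F$ in $\Pic(X)$ \emph{exactly}, and ``visible in the rank $2$ normal form of van Geemen'' is not a proof of this for arbitrary Picard rank. The equality (period equals index for elliptic fibrations on complex K3 surfaces, which have no multiple fibres) is true and citable from Ogg--Shafarevich theory (see \cite[Ch.~11]{huybrechts.k3.surfaces}), but as written your (2) rests on an asserted rather than established fact; note that this assertion is precisely the statement that your Jacobian-side computation of $\disc(\Pic(X))/\disc(N)$ agrees with the $\Pic(X)$-side computation via the divisibility of $F$, so it cannot be finessed away within your setup. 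The paper's direct lattice-theoretic argument renders the whole issue moot, which is the main thing its approach buys over yours.
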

\begin{proof}
By \cite[Theorem~1.6]{oguiso.entropy} $X$ admits precisely one elliptic fibration $|F|$ with infinite stabilizer in $\Aut(X)$, hence the second elliptic fibration $|F'|$ on $X$ has finite stabilizer. Therefore by \Cref{prop:shioda.tate} the negative definite lattice $N\coloneqq {F'}^\perp / \langle F' \rangle$ is a root overlattice of rank $\rho(X)-2$.
Let $n\ge 1$ be the divisibility of $F'\in \Pic(X)$. The vector $F'\in \Pic(X)$ induces an isotropic vector of order $n$ in the discriminant group of $\Pic(X)$, and dividing $F'$ by $n$ gives an overlattice $L$ of $\Pic(X)$. By construction we have $L\cong U \oplus N$, so
$$\disc(\Pic(X))=n^2 \disc(L) = n^2 \disc(N).$$
\end{proof}

We are now ready to complete the classification of the Picard lattices of K3 surfaces with a unique elliptic fibration.

\begin{theorem} \label{thm:K3.surfaces.unique.elliptic.fibration}
A K3 surface $X$ over $\ov k$ admits a unique elliptic fibration if and only if its Picard lattice is isometric to
$$\begin{pmatrix}
    0 &n\\
    n &-2
\end{pmatrix}$$
or to one of the $92$ explicit lattices of rank $\ge 3$ listed in the ancillary file\footnote{or at \url{https://github.com/dgvirtz/uniqueellipticK3}.\label{footnote:link}}. In particular every K3 surface of Picard rank $\rho(X) > 12$ admits at least two distinct elliptic fibrations.
\end{theorem}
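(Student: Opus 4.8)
The plan is to leverage that having a unique elliptic fibration is a property of the Picard lattice alone and that it forces zero entropy, so that the whole classification collapses to a finite, explicit lattice-theoretic check run over the known list of zero-entropy Picard lattices. First I would invoke the reduction already noted above: a unique elliptic fibration $|F|$ is preserved by every automorphism of $X$, hence $X$ has zero entropy. Picard rank $1$ carries no elliptic fibration and Picard rank $2$ is classical \cite[\S4.7]{van.geemen.remarks}, producing exactly the lattices $\left(\begin{smallmatrix}0&n\\n&-2\end{smallmatrix}\right)$; so it remains to handle $\rho(X)\ge 3$. There the classification of zero-entropy K3 surfaces by Yu and by Brandhorst--Mezzedimi, supplemented by the Nikulin--Vinberg list in the finite-automorphism case, supplies a finite and completely explicit set of candidate Picard lattices, and the task becomes to decide for each candidate whether it admits one or several elliptic fibrations.

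For the decision procedure I would split according to whether $\Aut(X)$ is finite or infinite. When $\Aut(X)$ is infinite and the lattice splits as $\Pic(X)\cong U\oplus W$, \Cref{prop:conditions.X.unique.elliptic.fibration} provides a clean equivalence: the fibration is unique precisely when $W$ is unique in its genus and admits no nontrivial overlattice, both of which are finite computations intrinsic to $W$. For infinite-automorphism lattices that do not split off a hyperbolic plane, I would apply \Cref{prop:unique.elliptic.fibration.root.overlattice} in contrapositive form: a second fibration forces a root overlattice $N$ of rank $\rho(X)-2$ with $\disc(\Pic(X))/\disc(N)$ a perfect square together with a matching isotropic vector of the discriminant group, so that the absence of such $N$ certifies uniqueness, while its presence flags the lattice for an explicit construction of the second fibration through \Cref{rk:findellipticfibration}. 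When $\Aut(X)$ is finite, the propositions above do not apply, but the nef cone is then rational polyhedral, so I would instead enumerate the primitive isotropic nef classes modulo $\Aut(X)$ directly and simply count the fibrations.

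The final ``in particular'' assertion I would deduce at the end: for $\rho(X)>12$ the Picard lattice is indefinite of rank $\ge 5$, hence represents $0$ by Meyer's theorem, so that $X$ is elliptic; were it to carry a unique fibration it would have zero entropy and thus appear on the explicit list, all of whose members have rank at most $12$ --- a contradiction --- so at least two fibrations must exist.

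I expect the principal obstacle to be the sheer size and completeness of the finite case analysis rather than any isolated difficulty. In particular, the lattices that do not decompose as $U\oplus W$ are delicate, since there the clean equivalence of \Cref{prop:conditions.X.unique.elliptic.fibration} is unavailable and one must combine the necessary conditions of \Cref{prop:unique.elliptic.fibration.root.overlattice} with a direct exhibition of a second fibration; likewise the finite-automorphism cases require a separate enumeration because the ``infinitely many sections'' dichotomy coming from \Cref{prop:shioda.tate} breaks down. Given the number of candidates and the volume of genus, overlattice, and discriminant-form computations involved, I would carry out the verification by computer and record the resulting $92$ lattices in the ancillary file.
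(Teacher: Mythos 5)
Your overall strategy --- reduce to zero entropy, run the lattice-theoretic criteria of Propositions \ref{prop:conditions.X.unique.elliptic.fibration} and \ref{prop:unique.elliptic.fibration.root.overlattice} over the known classification, and finish by computer --- is exactly the paper's, and most of the case division (rank $2$ via van Geemen, finite versus infinite automorphism group, $U\oplus W$ versus no $U$-summand) matches. Two steps, however, do not go through as you state them. First, your deduction of the ``in particular'' clause is circular as written: you argue that a rank $>12$ counterexample would ``appear on the explicit list, all of whose members have rank at most $12$'', but that list is the \emph{output} of the classification, and the zero-entropy input lists (both the finite-automorphism lattices of Nikulin--Vinberg and the infinite-automorphism ones of Yu and Brandhorst--Mezzedimi) do contain lattices of rank well above $12$. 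Either you must extend the entire case analysis to those ranks, or you should use the paper's direct argument: for $\rho(X)\ge 13$ one always has $\Pic(X)\cong U\oplus W$ with $\rk W\ge 11$, and by Kirschmer--Lorch no definite lattice of that rank is alone in its genus, so Proposition \ref{prop:conditions.X.unique.elliptic.fibration} immediately yields a second fibration. This reduction is also what lets the paper confine the finite-automorphism enumeration to the $99$ families with $3\le\rho(X)\le 12$.

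Second, for the infinite-automorphism lattices with no $U$-summand you propose to certify the existence of a second fibration ``through Remark \ref{rk:findellipticfibration}'', but that remark presupposes a decomposition $\Pic(X)\cong U\oplus W$ and is unavailable precisely in this case; moreover the conditions of Proposition \ref{prop:unique.elliptic.fibration.root.overlattice} are only necessary, so their failure certifies uniqueness while their success proves nothing. The paper's device here is to divide the (section-less) fibre class by its divisibility, obtaining an overlattice $L$ of $\Pic(X)$ which does contain $U$, decide the question for $L$, and descend a second fibration to $X$ via Proposition \ref{prop:unique.elliptic.fibration.overlattice.direct.summand}; the ten lattices surviving all exclusion criteria are then shown to carry a unique fibration by parts (1) and (2) of Proposition \ref{prop:unique.elliptic.fibration.root.overlattice}. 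Your alternative for the finite-automorphism case (enumerating primitive isotropic nef rays of the rational polyhedral nef cone rather than counting extended Dynkin diagrams in the dual graph of $(-2)$-curves) is a legitimate equivalent computation, since by Proposition \ref{prop:shioda.tate} every fibration there has a reducible fibre; that part is fine.
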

\begin{proof}
If the Picard rank of $X$ is $2$, we refer to \cite[§4.7]{van.geemen.remarks}.
Furthermore, if the Picard rank of $X$ is at least $13$, then $\Pic(X)\cong U\oplus W$ for a negative definite lattice $W$ by \cite[Corollary~14.3.8]{huybrechts.k3.surfaces}, and $W$ is not unique in its genus \cite{lorch.kirschmer.single.class}, so $X$ admits at least two distinct elliptic fibrations by \Cref{prop:conditions.X.unique.elliptic.fibration}. Therefore in the following we can restrict to the case $3\le \rho(X)\le 12$.
Let $X$ be a K3 surface with a unique elliptic fibration $|F|$. The automorphism group $\Aut(X)$ preserves $F\in \Pic(X)$, and therefore $X$ has zero entropy.

We assume first that $X$ has a finite automorphism group. Recall that a K3 surface with finite automorphism group has only finitely many $(-2)$-curves \cite[Corollary~8.4.7]{huybrechts.k3.surfaces}, and we refer to \cite{roulleau.atlas} and \cite{artebani.correa.roulleau} for an explicit description of the dual graphs of $(-2)$-curves on the $99$ families of K3 surfaces with finite automorphism group in rank $3\le \rho(X)\le 12$. Denote by $\Gamma$ the dual graph of $(-2)$-curves on $X$. Since $\rho(X)-2>0$, by \Cref{prop:shioda.tate} every elliptic fibration on $X$ has a reducible fiber, which consists of $(-2)$-curves, so we can count the elliptic fibrations on $X$ by classifying the extended Dynkin diagrams supported on $\Gamma$. Let us give two examples for the sake of completeness. The dual graph of $(-2)$-curves on K3 surfaces with $\Pic(X)\cong \langle 4 \rangle \oplus D_4$ (resp. $\Pic(X)\cong \langle 8 \rangle \oplus D_4$) is as follows:
\begin{equation*} 
 \begin{tikzpicture}[scale=0.8]
    \node (A5) at (0,0) [nodal] {};
    \node (A3) at (1.6,0) [nodal] {};
    \node (A2) at (0,1) [nodal] {};
    \node (A1) at (-1.6,0) [nodal] {};
    \node (A4) at (0,-1) [nodal] {};

\draw (A1)--(A2)--(A3)--(A4)--(A1) (A1)--(A5)--(A3);
\end{tikzpicture}
\qquad
 \begin{tikzpicture}[scale=0.6]
    \node (A5) at (2,0) [nodal] {};
    \node (A3) at (1,-1.6) [nodal] {};
    \node (A2) at (1,1.6) [nodal] {};
    \node (A1) at (-2,0) [nodal] {};
    \node (A4) at (-1,1.6) [nodal] {};
    \node (A6) at (-1,-1.6) [nodal] {};
    \node (A7) at (0,0) [nodal] {};

\draw (A1)--(A7)--(A5) (A4)--(A7)--(A3) (A2)--(A7)--(A6);
\draw[double] (A1)--(A4) (A2)--(A5) (A3)--(A6);
\end{tikzpicture}
\end{equation*}
In the first graph there are three extended Dynkin diagrams, two of type $\widetilde{A}_2$ and one of type $\widetilde{A}_3$. They correspond to elliptic fibrations $|F_1|$, $|F_2|$ and $|F_3|$ such that $F_i.F_j=2$ for $i\ne j \in \{1,2,3\}$, so $X$ admits precisely $3$ elliptic fibrations.
In the second graph there are three extended Dynkin diagrams, all of type $\widetilde{A}_1$. Since they are disjoint, they correspond to reducible fibers of the same elliptic fibration. In particular $X$ admits a unique elliptic fibration.

Assume now that $X$ has infinite automorphism group. Then $\Pic(X)$ is one of the $176$ lattices of rank $3\le \rho(X) \le 12$ in \cite[Corollary~1.5]{brandhorst.mezzedimi.borcherds} or \cite[Theorem~1.1]{yu.entropy}.
By \Cref{prop:conditions.X.unique.elliptic.fibration} we have necessary and sufficient conditions for $X$ to admit a unique elliptic fibration if $\Pic(X)$ contains a copy of the hyperbolic plane $U$. Hence in this case a quick computation in \texttt{Magma} \cite{magma} (see the link in footnote \ref{footnote:link} for code) singles out the Picard lattices of K3 surfaces with a unique elliptic fibration.
If instead $\Pic(X)$ does not contain a copy of $U$, let $|F|$ be an elliptic fibration on $X$ (necessarily without section). If $L$ is the lattice obtained from $\Pic(X)$ by dividing $F$ by its divisibility $\mathrm{div}(F)$, by construction $L$ contains a copy of $U$. By the previous analysis we know whether any K3 surface $Y$ with $\Pic(Y)\cong L$ admits two distinct elliptic fibrations. If it does, then $X$ admits two elliptic fibrations as well by \Cref{prop:unique.elliptic.fibration.overlattice.direct.summand}, and we can exclude the Picard lattice $\Pic(X)$.

After applying the criteria above, we remain with $6$ lattices in rank $3$, $3$ lattices in rank $4$ and $1$ lattice in rank $5$.
We claim that for all these Picard lattices $\Pic(X)$, the K3 surface $X$ admits a unique elliptic fibration.

For all these $10$ lattices, except
$$L_1=\begin{pmatrix}
    0 &2 &0 &0\\ 2 &-2 &0 &0\\ 0 &0 &-4 &-2\\ 0 &0 &-2 &-4
\end{pmatrix},\qquad
L_2=
\begin{pmatrix}
    0 &2 &0 &0 &0\\ 2 &-2 &1 &0 &0\\ 0 &1 &-2 &0 &0\\ 0 &0 &0 &-4 &-2\\ 0 &0 &0 &-2 &-4
\end{pmatrix},$$
it is straightforward to check that there exists no root overlattice $N$ of rank $\rho(X)-2$ such that $\disc(\Pic(X))/\disc(N)$ is a square. Therefore the corresponding K3 surfaces admit a unique elliptic fibration by part (1) of \Cref{prop:unique.elliptic.fibration.root.overlattice}.

On the other hand, for $L_1$ (resp. $L_2$) the root overlattice $N=A_2$ (resp. $N=A_2\oplus A_1$) is such that $\disc(L_1)/\disc(N) = \disc(L_2)/\disc(N) = 4^2$.
A simple computation shows that there is no vector of order $4$ in the discriminant group $A_{L_1}$, while the only vectors of order $4$ in $A_{L_2}$ have norm $\pm \frac{1}{2} \pmod{2\ZZ}$, so the K3 surfaces with $\Pic(X)\cong L_1$ or $L_2$ admit a unique elliptic fibration by part (2) of \Cref{prop:unique.elliptic.fibration.root.overlattice}. 
\end{proof}

\begin{remark} \label{rk:unique.elliptic.fibration.positive.char}
The classification in \Cref{thm:K3.surfaces.unique.elliptic.fibration} is also valid over an algebraically closed field of arbitrary characteristic. 
Indeed non-supersingular K3 surfaces can be lifted to characteristic $0$ together with their Picard lattice, and since the property of having a unique elliptic fibration only depends on the Picard lattice, the classification in \Cref{thm:K3.surfaces.unique.elliptic.fibration} immediately extends to non-supersingular K3 surfaces.
On the other hand, supersingular K3 surfaces are known to always admit an automorphism of positive entropy \cite[Corollary~5.4]{brandhorst.mezzedimi.borcherds}, so they admit at least two elliptic fibrations. 
\end{remark}

\printbibliography

@book {serre.topics.galois.theory,
    AUTHOR = {Serre, Jean-Pierre},
     TITLE = {Topics in {G}alois theory},
    SERIES = {Research Notes in Mathematics},
    VOLUME = {1},
   EDITION = {Second},
      NOTE = {With notes by Henri Darmon},
 PUBLISHER = {A K Peters, Ltd., Wellesley, MA},
      YEAR = {2008},
     PAGES = {xvi+120},
      ISBN = {978-1-56881-412-4},
   MRCLASS = {12F12},
  MRNUMBER = {2363329},
}

@article {corvaja.zannier.hilbert,
    AUTHOR = {Corvaja, Pietro and Zannier, Umberto},
     TITLE = {On the {H}ilbert property and the fundamental group of
              algebraic varieties},
   JOURNAL = {Math. Z.},
  FJOURNAL = {Mathematische Zeitschrift},
    VOLUME = {286},
      YEAR = {2017},
    NUMBER = {1-2},
     PAGES = {579--602},
      ISSN = {0025-5874,1432-1823},
   MRCLASS = {14G05 (12E25 12F12 14J28)},
  MRNUMBER = {3648511},
MRREVIEWER = {Daniel\ Loughran},
       DOI = {10.1007/s00209-016-1775-x},
       URL = {https://doi.org/10.1007/s00209-016-1775-x},
}

@article {CDJLZ,
    AUTHOR = {Corvaja, Pietro and Demeio, Julian Lawrence and Javanpeykar,
              Ariyan and Lombardo, Davide and Zannier, Umberto},
     TITLE = {On the distribution of rational points on ramified covers of
              abelian varieties},
   JOURNAL = {Compos. Math.},
  FJOURNAL = {Compositio Mathematica},
    VOLUME = {158},
      YEAR = {2022},
    NUMBER = {11},
     PAGES = {2109--2155},
      ISSN = {0010-437X,1570-5846},
   MRCLASS = {14G05 (11G15 11G35 14K15)},
  MRNUMBER = {4519542},
MRREVIEWER = {Chunhui\ Liu},
       DOI = {10.1112/s0010437x22007746},
       URL = {https://doi.org/10.1112/s0010437x22007746},
}

@article {gvirtz.mezzedimi.hilbert,
    AUTHOR = {Gvirtz-Chen, Dami\'{a}n and Mezzedimi, Giacomo},
     TITLE = {A {H}ilbert irreducibility theorem for {E}nriques surfaces},
   JOURNAL = {Trans. Amer. Math. Soc.},
  FJOURNAL = {Transactions of the American Mathematical Society},
    VOLUME = {376},
      YEAR = {2023},
    NUMBER = {6},
     PAGES = {3867--3890},
      ISSN = {0002-9947,1088-6850},
   MRCLASS = {14J28 (11G35 14G05 14J27)},
  MRNUMBER = {4586799},
       DOI = {10.1090/tran/8831},
       URL = {https://doi.org/10.1090/tran/8831},
}

@article {bogomolov.tschinkel.density,
    AUTHOR = {Bogomolov, F. A. and Tschinkel, Yu.},
     TITLE = {Density of rational points on elliptic {$K3$} surfaces},
   JOURNAL = {Asian J. Math.},
  FJOURNAL = {Asian Journal of Mathematics},
    VOLUME = {4},
      YEAR = {2000},
    NUMBER = {2},
     PAGES = {351--368},
      ISSN = {1093-6106,1945-0036},
   MRCLASS = {14G05 (11G05 11G35 14C22 14J10 14J28)},
  MRNUMBER = {1797587},
MRREVIEWER = {Matthew\ H.\ Baker},
       DOI = {10.4310/AJM.2000.v4.n2.a6},
       URL = {https://doi.org/10.4310/AJM.2000.v4.n2.a6},
}

@misc{brandhorst.mezzedimi.borcherds,
      title={{B}orcherds lattices and {K}3 surfaces of zero entropy}, 
      author={Simon Brandhorst and Giacomo Mezzedimi},
      year={2023},
      eprint={2211.09600},
      archivePrefix={arXiv},
      primaryClass={math.AG}
}

@misc{yu.entropy,
      title={{K}3 surface entropy and automorphism groups}, 
      author={Xun Yu},
      year={2022},
      eprint={2211.07526},
      archivePrefix={arXiv},
      primaryClass={math.AG}
}

@article {demeio.elliptic.fibrations,
    AUTHOR = {Demeio, Julian Lawrence},
     TITLE = {Elliptic fibrations and the {H}ilbert property},
   JOURNAL = {Int. Math. Res. Not. IMRN},
  FJOURNAL = {International Mathematics Research Notices. IMRN},
      YEAR = {2021},
    NUMBER = {13},
     PAGES = {10260--10277},
      ISSN = {1073-7928,1687-0247},
   MRCLASS = {14G05 (14J27 14J28)},
  MRNUMBER = {4283578},
MRREVIEWER = {Sajad\ Salami},
       DOI = {10.1093/imrn/rnz108},
       URL = {https://doi.org/10.1093/imrn/rnz108},
}

@article {nikulin.bilinear,
    AUTHOR = {Nikulin, V. V.},
     TITLE = {Integer symmetric bilinear forms and some of their geometric
              applications},
   JOURNAL = {Izv. Akad. Nauk SSSR Ser. Mat.},
  FJOURNAL = {Izvestiya Akademii Nauk SSSR. Seriya Matematicheskaya},
    VOLUME = {43},
      YEAR = {1979},
    NUMBER = {1},
     PAGES = {111--177, 238},
      ISSN = {0373-2436},
   MRCLASS = {10C05 (14G30 14J17 14J25 57M99 57R45 58C27)},
  MRNUMBER = {525944},
MRREVIEWER = {I.\ Dolgachev},
}

@article {nikulin.elliptic,
    AUTHOR = {Nikulin, Viacheslav V.},
     TITLE = {Elliptic fibrations on {$\rm K3$} surfaces},
   JOURNAL = {Proc. Edinb. Math. Soc. (2)},
  FJOURNAL = {Proceedings of the Edinburgh Mathematical Society. Series II},
    VOLUME = {57},
      YEAR = {2014},
    NUMBER = {1},
     PAGES = {253--267},
      ISSN = {0013-0915,1464-3839},
   MRCLASS = {14J28 (14C22 14J27 14J50)},
  MRNUMBER = {3165023},
MRREVIEWER = {Marco\ Andreatta},
       DOI = {10.1017/S0013091513000953},
       URL = {https://doi.org/10.1017/S0013091513000953},
}

@book {huybrechts.k3.surfaces,
    AUTHOR = {Huybrechts, Daniel},
     TITLE = {Lectures on {K}3 surfaces},
    SERIES = {Cambridge Studies in Advanced Mathematics},
    VOLUME = {158},
 PUBLISHER = {Cambridge University Press, Cambridge},
      YEAR = {2016},
     PAGES = {xi+485},
      ISBN = {978-1-107-15304-2},
   MRCLASS = {14J28},
  MRNUMBER = {3586372},
MRREVIEWER = {Shigeyuki\ Kondo},
       DOI = {10.1017/CBO9781316594193},
       URL = {https://doi.org/10.1017/CBO9781316594193},
}

@book {schuett.shioda.mordell.weil,
    AUTHOR = {Sch\"{u}tt, Matthias and Shioda, Tetsuji},
     TITLE = {Mordell-{W}eil lattices},
    SERIES = {Ergebnisse der Mathematik und ihrer Grenzgebiete. 3. Folge. A
              Series of Modern Surveys in Mathematics [Results in
              Mathematics and Related Areas. 3rd Series. A Series of Modern
              Surveys in Mathematics]},
    VOLUME = {70},
 PUBLISHER = {Springer, Singapore},
      YEAR = {2019},
     PAGES = {xvi+431},
      ISBN = {978-981-32-9300-7; 978-981-32-9301-4},
   MRCLASS = {11G05 (11F80 11H06 14J20 14J26 14J27 14J28)},
  MRNUMBER = {3970314},
MRREVIEWER = {\'{A}lvaro\ Lozano-Robledo},
       DOI = {10.1007/978-981-32-9301-4},
       URL = {https://doi.org/10.1007/978-981-32-9301-4},
}

@unpublished {CossecDolgachevLiedtke,
	author = {Cossec, Fran\c{c}ois and Dolgachev, Igor and Liedtke, Christian},
	title = {Enriques surfaces {I}},
	year = {February 22, 2023},
	note = {with an appendix by Shigeyuki Kond\={o}, draft available on \url{www.math.lsa.umich.edu/~idolga/EnriquesOne.pdf}},
}

@article {keum.zhang.fundamental,
    AUTHOR = {Keum, J. and Zhang, D.-Q.},
     TITLE = {Fundamental groups of open {$K3$} surfaces, {E}nriques
              surfaces and {F}ano 3-folds},
   JOURNAL = {J. Pure Appl. Algebra},
  FJOURNAL = {Journal of Pure and Applied Algebra},
    VOLUME = {170},
      YEAR = {2002},
    NUMBER = {1},
     PAGES = {67--91},
      ISSN = {0022-4049,1873-1376},
   MRCLASS = {14J28 (14F35 14J45)},
  MRNUMBER = {1896342},
MRREVIEWER = {Serge\ Cantat},
       DOI = {10.1016/S0022-4049(01)00110-4},
       URL = {https://doi.org/10.1016/S0022-4049(01)00110-4},
}

@article {van.geemen.remarks,
    AUTHOR = {van Geemen, Bert},
     TITLE = {Some remarks on {B}rauer groups of {$K3$} surfaces},
   JOURNAL = {Adv. Math.},
  FJOURNAL = {Advances in Mathematics},
    VOLUME = {197},
      YEAR = {2005},
    NUMBER = {1},
     PAGES = {222--247},
      ISSN = {0001-8708,1090-2082},
   MRCLASS = {14F22 (14E20 14J27 14J28)},
  MRNUMBER = {2166182},
MRREVIEWER = {Tihomir\ A.\ Petrov},
       DOI = {10.1016/j.aim.2004.10.004},
       URL = {https://doi.org/10.1016/j.aim.2004.10.004},
}

@incollection {oguiso.entropy,
    AUTHOR = {Oguiso, Keiji},
     TITLE = {Automorphisms of hyperk\"{a}hler manifolds in the view of
              topological entropy},
 BOOKTITLE = {Algebraic geometry},
    SERIES = {Contemp. Math.},
    VOLUME = {422},
     PAGES = {173--185},
 PUBLISHER = {Amer. Math. Soc., Providence, RI},
      YEAR = {2007},
   MRCLASS = {14J50 (14J28 14J40 37B40 53C26)},
  MRNUMBER = {2296437},
MRREVIEWER = {Charles Favre},
       DOI = {10.1090/conm/422/08060},
       URL = {https://doi.org/10.1090/conm/422/08060},
}

@article{nikulin.factor.groups,
 AUTHOR = {Nikulin, V. V.},
 TITLE = {Factor groups of groups of automorphisms of hyperbolic forms with respect to subgroups generated by 2-reflections. {Algebro}-geometric applications},
 FJOURNAL = {Journal of Soviet Mathematics},
 JOURNAL = {J. Sov. Math.},
 ISSN = {0090-4104},
 VOLUME = {22},
 PAGES = {1401--1475},
 YEAR = {1983},
 DOI = {10.1007/BF01094757},
}

@article {lorch.kirschmer.single.class,
    AUTHOR = {Lorch, David and Kirschmer, Markus},
     TITLE = {Single-class genera of positive integral lattices},
   JOURNAL = {LMS J. Comput. Math.},
  FJOURNAL = {LMS Journal of Computation and Mathematics},
    VOLUME = {16},
      YEAR = {2013},
     PAGES = {172--186},
   MRCLASS = {11E12 (11E41)},
  MRNUMBER = {3091733},
MRREVIEWER = {Mikhail V. Belolipetsky},
       DOI = {10.1112/S1461157013000107},
       URL = {https://doi.org/10.1112/S1461157013000107},
}

@article {roulleau.atlas,
    AUTHOR = {Roulleau, Xavier},
     TITLE = {An atlas of {K}3 surfaces with finite automorphism group},
   JOURNAL = {\'{E}pijournal G\'{e}om. Alg\'{e}brique},
  FJOURNAL = {\'{E}pijournal de G\'{e}om\'{e}trie Alg\'{e}brique. EPIGA},
    VOLUME = {6},
      YEAR = {2022},
     PAGES = {Art. 19, 95},
      ISSN = {2491-6765},
   MRCLASS = {14J28 (14J50)},
  MRNUMBER = {4526267},
MRREVIEWER = {Aline\ Zanardini},
}

@Book{SGA1,
 Editor = {A. {Grothendieck}},
 Title = {{S\'eminaire de g\'eom\'etrie alg\'ebrique du Bois Marie 1960-61. Rev\^etements \'etales et groupe fondamental (SGA 1). Un s\'eminaire dirig\'e par Alexander Grothendieck. Augment\'e de deux expos\'es de M. Raynaud.}},
 FSeries = {{Documents Math\'ematiques}},
 Series = {{Doc. Math. (SMF)}},
 ISSN = {1629-4939},
 Number = {3},
 ISBN = {2-85629-141-4},
 Pages = {xviii + 325},
 Year = {2003},
 Publisher = {Soci\'et\'e Math\'ematique de France},
 Language = {French},
 MSC2010 = {14-02 14B25 14E20 14F20 14F35},
 Zbl = {1039.14001}
}

@article {garbagnati,
    AUTHOR = {Garbagnati, Alice},
     TITLE = {On {K}3 surface quotients of {K}3 or {A}belian surfaces},
   JOURNAL = {Canad. J. Math.},
  FJOURNAL = {Canadian Journal of Mathematics. Journal Canadien de
              Math\'{e}matiques},
    VOLUME = {69},
      YEAR = {2017},
    NUMBER = {2},
     PAGES = {338--372},
      ISSN = {0008-414X,1496-4279},
   MRCLASS = {14J28 (14J10 14J50)},
  MRNUMBER = {3612089},
MRREVIEWER = {Zhiyu\ Tian},
       DOI = {10.4153/CJM-2015-058-1},
       URL = {https://doi.org/10.4153/CJM-2015-058-1},
}

@article {mezzedimi.entropy,
    AUTHOR = {Mezzedimi, Giacomo},
     TITLE = {K3 surfaces of zero entropy admitting an elliptic fibration
              with only irreducible fibers},
   JOURNAL = {J. Algebra},
  FJOURNAL = {Journal of Algebra},
    VOLUME = {587},
      YEAR = {2021},
     PAGES = {344--389},
      ISSN = {0021-8693,1090-266X},
   MRCLASS = {14J28 (14J27 14J50)},
  MRNUMBER = {4304793},
MRREVIEWER = {Jing\ Zhang},
       DOI = {10.1016/j.jalgebra.2021.08.005},
       URL = {https://doi.org/10.1016/j.jalgebra.2021.08.005},
}

@incollection {shioda.inose,
    AUTHOR = {Shioda, T. and Inose, H.},
     TITLE = {On singular {$K3$} surfaces},
 BOOKTITLE = {Complex analysis and algebraic geometry},
     PAGES = {119--136},
 PUBLISHER = {Iwanami Shoten Publishers, Tokyo},
      YEAR = {1977},
   MRCLASS = {14J25 (10D20 14G13)},
  MRNUMBER = {441982},
MRREVIEWER = {James\ Milne},
}

@incollection {grothendieck.dixIII,
    AUTHOR = {Grothendieck, Alexander},
     TITLE = {Le groupe de {B}rauer. {III}. {E}xemples et compl\'{e}ments},
 BOOKTITLE = {Dix expos\'{e}s sur la cohomologie des sch\'{e}mas},
    SERIES = {Adv. Stud. Pure Math.},
    VOLUME = {3},
     PAGES = {88--188},
 PUBLISHER = {North-Holland, Amsterdam},
      YEAR = {1968},
   MRCLASS = {14F22},
  MRNUMBER = {244271},
MRREVIEWER = {James\ Milne},
}

@book {sga6,
     TITLE = {Th\'{e}orie des intersections et th\'{e}or\`eme de
              {R}iemann-{R}och},
    SERIES = {Lecture Notes in Mathematics},
    VOLUME = {Vol. 225},
      NOTE = {S\'{e}minaire de G\'{e}om\'{e}trie Alg\'{e}brique du
              Bois-Marie 1966--1967 (SGA 6),
              Dirig\'{e} par P. Berthelot, A. Grothendieck et L. Illusie.
              Avec la collaboration de D. Ferrand, J. P. Jouanolou, O.
              Jussila, S. Kleiman, M. Raynaud et J. P. Serre},
 PUBLISHER = {Springer-Verlag, Berlin-New York},
      YEAR = {1971},
     PAGES = {xii+700},
   MRCLASS = {14-06},
  MRNUMBER = {354655},
}

@article {landesman,
    AUTHOR = {Landesman, Aaron},
     TITLE = {Invariance of the tame fundamental group under base change
              between algebraically closed fields},
   JOURNAL = {Essent. Number Theory},
  FJOURNAL = {Essential Number Theory},
    VOLUME = {3},
      YEAR = {2024},
    NUMBER = {1},
     PAGES = {1--18},
      ISSN = {2834-4626,2834-4634},
   MRCLASS = {14F35},
  MRNUMBER = {4687263},
       DOI = {10.2140/ent.2024.3.1},
       URL = {https://doi.org/10.2140/ent.2024.3.1},
}

@article {catanese.oguiso.keum,
    AUTHOR = {Catanese, Fabrizio and Keum, JongHae and Oguiso, Keiji},
     TITLE = {Some remarks on the universal cover of an open {$K3$} surface},
   JOURNAL = {Math. Ann.},
  FJOURNAL = {Mathematische Annalen},
    VOLUME = {325},
      YEAR = {2003},
    NUMBER = {2},
     PAGES = {279--286},
      ISSN = {0025-5831,1432-1807},
   MRCLASS = {14J28 (14E20)},
  MRNUMBER = {1962049},
MRREVIEWER = {De-Qi\ Zhang},
       DOI = {10.1007/s00208-002-0376-7},
       URL = {https://doi.org/10.1007/s00208-002-0376-7},
}

@article {cohn,
    AUTHOR = {Cohn, P. M.},
     TITLE = {On the decomposition of a field as a tensor product},
   JOURNAL = {Glasgow Math. J.},
  FJOURNAL = {Glasgow Mathematical Journal},
    VOLUME = {20},
      YEAR = {1979},
    NUMBER = {2},
     PAGES = {141--145},
      ISSN = {0017-0895,1469-509X},
   MRCLASS = {12F05},
  MRNUMBER = {536386},
MRREVIEWER = {B.\ Fein},
       DOI = {10.1017/S0017089500003852},
       URL = {https://doi.org/10.1017/S0017089500003852},
}

@article {lai.nakahara,
    AUTHOR = {Lai, Kuan-Wen and Nakahara, Masahiro},
     TITLE = {Uniform potential density for rational points on algebraic
              groups and elliptic {K}3 surfaces},
   JOURNAL = {Int. Math. Res. Not. IMRN},
  FJOURNAL = {International Mathematics Research Notices. IMRN},
      YEAR = {2022},
    NUMBER = {23},
     PAGES = {18541--18588},
      ISSN = {1073-7928,1687-0247},
   MRCLASS = {14G05 (11G35 14J27 14J28)},
  MRNUMBER = {4519151},
MRREVIEWER = {Chunhui\ Liu},
       DOI = {10.1093/imrn/rnab237},
       URL = {https://doi.org/10.1093/imrn/rnab237},
}

@article {ct.skoro,
    AUTHOR = {Colliot-Th\'{e}l\`ene, Jean-Louis and Skorobogatov, Alexei N.},
     TITLE = {Descente galoisienne sur le groupe de {B}rauer},
   JOURNAL = {J. Reine Angew. Math.},
  FJOURNAL = {Journal f\"{u}r die Reine und Angewandte Mathematik. [Crelle's
              Journal]},
    VOLUME = {682},
      YEAR = {2013},
     PAGES = {141--165},
      ISSN = {0075-4102,1435-5345},
   MRCLASS = {11G05 (14F22)},
  MRNUMBER = {3181502},
MRREVIEWER = {Montserrat\ Vela},
}

@article {mukai,
    AUTHOR = {Mukai, Shigeru},
     TITLE = {Finite groups of automorphisms of {$K3$} surfaces and the
              {M}athieu group},
   JOURNAL = {Invent. Math.},
  FJOURNAL = {Inventiones Mathematicae},
    VOLUME = {94},
      YEAR = {1988},
    NUMBER = {1},
     PAGES = {183--221},
      ISSN = {0020-9910,1432-1297},
   MRCLASS = {32J15 (14J28 14J50 20B25 20D08)},
  MRNUMBER = {958597},
MRREVIEWER = {Geoffrey\ Mason},
       DOI = {10.1007/BF01394352},
       URL = {https://doi.org/10.1007/BF01394352},
}

@Article{shinder.zhang,
 Author = {Shinder, Evgeny and Zhang, Ziyu},
 Title = {L-equivalence for degree five elliptic curves, elliptic fibrations and {{\(K3\)}} surfaces},
 FJournal = {Bulletin of the London Mathematical Society},
 Journal = {Bull. Lond. Math. Soc.},
 ISSN = {0024-6093},
 Volume = {52},
 Number = {2},
 Pages = {395--409},
 Year = {2020},
 DOI = {10.1112/blms.12339},
 Keywords = {14F08,14H52,14J28,14D06},
 zbMATH = {7206660},
 Zbl = {1442.14063}
}

@article {artebani.correa.roulleau,
    AUTHOR = {Artebani, Michela and Correa Deisler, Claudia and Roulleau,
              Xavier},
     TITLE = {Mori dream {K}3 surfaces of {P}icard number four: projective
              models and {C}ox rings},
   JOURNAL = {Israel J. Math.},
  FJOURNAL = {Israel Journal of Mathematics},
    VOLUME = {258},
      YEAR = {2023},
    NUMBER = {1},
     PAGES = {81--135},
      ISSN = {0021-2172,1565-8511},
   MRCLASS = {14J28 (14C20 14J10 14M20)},
  MRNUMBER = {4682933},
       DOI = {10.1007/s11856-023-2469-9},
       URL = {https://doi.org/10.1007/s11856-023-2469-9},
}

@article {magma,
    AUTHOR = {Bosma, Wieb and Cannon, John and Playoust, Catherine},
     TITLE = {The {M}agma algebra system. {I}. {T}he user language},
   JOURNAL = {J. Symbolic Comput.},
  FJOURNAL = {Journal of Symbolic Computation},
    VOLUME = {24},
      YEAR = {1997},
    NUMBER = {3-4},
     PAGES = {235--265},
      ISSN = {0747-7171},
   MRCLASS = {68Q40},
       DOI = {10.1006/jsco.1996.0125},
       URL = {http://dx.doi.org/10.1006/jsco.1996.0125},
}

\end{document}